\newcommand{\ZZ}{\mathbb{Z}}
\newcommand{\nc}{\newcommand}
\numberwithin{equation}{section}
\newtheorem{thm}{Theorem}[section]
\newtheorem{prop}[thm]{Proposition}
\newtheorem{lem}[thm]{Lemma}
\newtheorem{cor}[thm]{Corollary}
\theoremstyle{remark}
\newtheorem{rem}[thm]{Remark}
\newtheorem{definition}[thm]{Definition}
\newtheorem{example}[thm]{Example}
\nc{\gl}{\mathfrak{gl}}
\nc{\GL}{\mathfrak{GL}}
\nc{\g}{\mathfrak{g}}
\nc{\gh}{\widehat\g}
\nc{\h}{\mathfrak{h}}
\nc{\la}{\lambda}
\nc{\al}{\alpha }
\nc{\ve}{\varepsilon }
\nc{\om}{\omega }
\nc{\ta}{\theta}
\nc{\veps}{\varepsilon}
\nc{\ch}{{\mathop {\rm ch}}}
\nc{\Tr}{{\mathop {\rm Tr}\,}}
\nc{\Id}{{\mathop {\rm Id}}}
\nc{\ad}{{\mathop {\rm ad}}}
\nc{\bra}{\langle}
\nc{\ket}{\rangle}
\nc{\x}{{\bf x}}
\nc{\bs}{{\bf s}}
\nc{\bp}{{\bf p}}
\nc{\bc}{{\bf c}}
\nc{\pa}{\partial}
\nc{\ld}{\ldots}
\nc{\cd}{\cdots}
\nc{\hk}{\hookrightarrow}
\nc{\T}{\otimes}
\newcommand{\bea}{\begin{equation}}
\newcommand{\ena}{\end{equation}}
\nc{\gr}{\mathrm{gr}}
\nc{\ov}{\overline}
\nc{\cO}{\mathcal O}
\nc{\cF}{\mathcal F}
\nc{\cL}{\mathcal L}
\nc{\msl}{\mathfrak{sl}}
\nc{\mgl}{\mathfrak{gl}}
\nc{\U}{\mathrm U}
\nc{\V}{\EuScript V}
\nc{\bH}{\EuScript H}
\nc{\Res}{\mathrm{Res\ }}
\newcommand{\bC}{{\mathbb C}}
\newcommand{\bP}{{\mathbb P}}
\newcommand{\Fl}{\EuScript{F}}
\newcommand{\bV}{{\bf V}}
\newcommand{\bd}{{\bf d}}
\newcommand{\be}{{\bf e}}
\newcommand{\dimv}{{\bf dim}\,}
\newcommand{\Gr}{{\textrm Gr}}
\newcommand{\mn}[3]{{\left(
\begin{matrix}{\displaystyle #1}\\
{\displaystyle #2},\dots ,{\displaystyle #3}\end{matrix}
\right)_{q}
}}
\begin{document}

\title{Schubert Quiver Grassmannians}
\author{Giovanni Cerulli Irelli, Evgeny Feigin, Markus Reineke}
\address{Giovanni Cerulli Irelli:\newline
Sapienza Universit\`a di Roma. Piazzale Aldo Moro 5, 00185, Rome (ITALY)}
\email{cerulli.math@googlemail.com}
\address{Evgeny Feigin:\newline
Department of Mathematics,\newline National Research University Higher School of Economics,\newline
Russia, 117312, Moscow, Vavilova str. 7\newline
{\it and }\newline
Tamm Department of Theoretical Physics,
Lebedev Physics Institute
}
\email{evgfeig@gmail.com}
\address{Markus Reineke:\newline 
Fachbereich C - Mathematik, Bergische Universit\"at Wuppertal, D - 42097 Wuppertal}
\email{mreineke@uni-wuppertal.de}

\begin{abstract}
Quiver Grassmannians are projective varieties para\-metrizing subrepresentations of given dimension in a 
quiver representation. We define a class of quiver Grassmannians generalizing 
those which realize degenerate flag varieties.
We show that
each irreducible component of the quiver Grassmannians in question is isomorphic to a 
Schubert variety. We give an explicit description of the set of irreducible components,
identify all the Schubert varieties arising, and compute the Poincar\'e polynomials of these
quiver Grassmannians.  
\end{abstract}

\maketitle

\section*{Introduction}
Let $R_\bullet=R_1\subseteq R_2\subseteq\cdots\subseteq R_n\subseteq V$ and 
$Q_\bullet=Q_1\subseteq Q_2\subseteq\cdots\subseteq Q_n\subseteq V$ be two flags in a complex vector space 
$V\simeq \mathbf{C}^N$, such that $Q_i\subseteq R_i$ for all $i=1,2,\cdots, n$ and such that  
they are both fixed by a Borel subgroup $B$ of $GL_N$. In this paper we study  
varieties consisting of flags $E_\bullet$ in a partial flag manifold 
$\mathcal{F}l_{(f_1,f_2,\cdots, f_n)}(V)$ of V such that $Q_i\subseteq E_i\subseteq R_i$ 
for all $i=1,2,\cdots, n$. By assumption, these subvarieties of $\mathcal{F}l_{(f_1,f_2,\cdots, f_n)}(V)$ 
are $B$--stable, hence they are unions of finitely many Schubert varieties. 
We show that such varieties arise as suitable quiver Grassmannians of equioriented type 
$A_n$--quivers and we call them \emph{Schubert quiver Grassmannians} (SQG for short). 
This observation allows, on one side, to study the geometry of SQGs via representation 
theory of quivers, and on the other side to study the geometry of such quiver Grassmannians using 
the theory of Schubert varieties. 

Recall that, if $\Gamma$ is a finite quiver with set of vertices $\Gamma_0$, and $M$  is a finite--dimensional complex 
$\Gamma$--representation, then for a dimension vector $\be\in\ZZ^{\Gamma_0}_{\geq0}$, the \emph{quiver Grassmannian} $\Gr_\be(M)$
is the (complex) variety of $\be$--dimensional sub--representations of $M$.  In \cite{CFR}, the case when 
$\Gamma$ is Dynkin (i.e. an orientation of a simply--laced Dynkin diagram) and $M=P\oplus I$ -- the sum of a 
projective and an injective representation of $\Gamma$ -- was studied. More precisely,
it was shown that the varieties $\Gr_{\dimv P}(P\oplus I)$ enjoy many nice properties.
In particular, they are irreducible varieties endowed with the action of a large
abelian unipotent group, acting with an open dense orbit. An example of such varieties
is provided by the type $A$ degenerate flag varieties \cite{F1}, \cite{F2}. In this case, 
$\Gamma$ is the equioriented type $A$ quiver, $P$ is the direct sum of all indecomposable projective representations 
and $I$ is the direct sum of all indecomposable injective representations.
It was shown in \cite{CL}, \cite{CLL} that the above mentioned abelian unipotent group can be identified with a certain subgroup of
a Borel of the group $SL_N$ (note that $N$ here is much larger than the number of vertices
of the initial quiver). Moreover, the quiver Grassmannian itself can be identified with 
a Schubert variety for $SL_N$. The main reason why the Borel subgroup appears is that the indecomposable
direct summands of $M=P\oplus I$ form a chain (that is, a totally ordered subset) with respect to the partial ordering induced by the  Auslander-Reiten quiver of $\Gamma$.

This observation motivates the following more general definition:
Let $\Gamma$ be a Dynkin quiver  and let $M_1,\dots, M_r$
be a set of indecomposable representations forming a chain with respect to the partial ordering induced by the Auslander-Reiten quiver.
We call a direct sum $M=\bigoplus_{i=1}^r M_i^{\oplus a_i}$ \emph{catenoid}.

From now on let $\Gamma$ be a quiver of equioriented type $A_n$. In this case, there is a natural way to embed every quiver Grassmannian $Gr_\be(M)$ into a flag manifold: consider a minimal projective resolution 
$$
\xymatrix{
0\ar[r]& Q\ar^\iota[r]& R\ar[r]&M\ar[r]& 0
}
$$
of a representation $M$. Then $Gr_\mathbf{e}(M)$ is isomorphic (as a scheme) to the variety $Gr_\mathbf{f}(\xymatrix@1@C=15pt{Q\ar|-{\iota}[r]&R})$ consisting of sub--representations of $R$ of dimension vector $\mathbf{f}:=\be+\mathbf{dim}\, Q$ containing $\iota(Q)$ (see Proposition~\ref{Prop:IsoQuiveFlagVar}). Since all the maps defining $Q$ and $R$ are injective, they induce flags inside the vector space $R_n$, and hence the quiver Grassmannian $Gr_\mathbf{e}(M)$ embeds into a flag manifold of $R_n$. We refer to this construction as the \emph{natural embedding of a quiver Grassmannian inside a flag manifold}. Notice that $R_n\simeq\mathbf{C}^N$ where $N=a_1+\cdots+a_r$ is the number of indecomposable direct summands of M. 
Our first result is the following.  
\begin{thm}
The natural embedding of $Gr_\be(M)$ inside a flag manifold is stable under the action of a Borel $B_N\subset GL_N$ if and only if M is catenoid. 
In this case, $Gr_\be(M)$ is a SQG and every SQG arises in this way. The irreducible components of SQGs are Schubert varieties and can be explicitly described. The Schubert cells -- the $B_N$-orbits -- provide a cellular decomposition of 
$\Gr_\be (M)$ and they are stable under the action of $\mathrm{Aut}(M)$; in particular the points of a given cell are all isomorphic as representations of $\Gamma$. 
\end{thm}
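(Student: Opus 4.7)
The plan is to translate the catenoid condition into a combinatorial flag-compatibility condition inside $R_n$, then deduce the remaining statements from Bruhat decomposition.

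First I set up notation. The indecomposable representations of equioriented $A_n$ are the interval modules $U_{[i,j]}$, with projective covers $P_i=U_{[i,n]}$ and minimal projective resolutions $0\to P_{j+1}\to P_i\to U_{[i,j]}\to 0$. Writing $M=\bigoplus_k U_{[i_k,j_k]}^{\oplus a_k}$, the minimal resolution has $R=\bigoplus_k P_{i_k}^{\oplus a_k}$ and $Q=\bigoplus_k P_{j_k+1}^{\oplus a_k}$; choosing a basis $\{e_{k,m}\}$ of $R_n\cong\CC^N$ indexed by the indecomposable summands of $R$ (with $1\le m\le a_k$), the induced flags are
\[ R_l=\spa\{e_{k,m}:i_k\le l\},\qquad \iota(Q_l)=\spa\{e_{k,m}:j_k<l\}. \]
Two flags in $\CC^N$ share a common Borel stabiliser if and only if the totality of their members is totally ordered by inclusion. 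Since $R_\bullet$ and $\iota(Q_\bullet)$ are individually chains, the condition reduces to comparability of every $R_l$ with every $\iota(Q_{l'})$; a direct check on the bases above shows comparability fails exactly when two summands $U_{[i_{k_1},j_{k_1}]},U_{[i_{k_2},j_{k_2}]}$ are incomparable in the Auslander--Reiten partial order. Thus Borel-stability of the embedding is equivalent to $M$ being catenoid.

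When this holds, the defining conditions $\iota(Q_l)\subseteq E_l\subseteq R_l$ and $E_l\subseteq E_{l+1}$ of $\Gr_\be(M)$ are $B_N$-equivariant, so $\Gr_\be(M)$ is a SQG. Conversely, any SQG comes from two Borel-compatible flags which assemble into projective representations $Q\hookrightarrow R$ (all structure maps being inclusions); cancelling common projective summands to reach the minimal resolution, the cokernel $M=R/Q$ is catenoid by the same criterion, and the original SQG is identified with $\Gr_{\dimv M}(M)$ via Proposition~\ref{Prop:IsoQuiveFlagVar}. Granting $B_N$-stability, the Bruhat decomposition of the ambient flag variety restricts to a cellular decomposition of $\Gr_\be(M)$, and any irreducible component -- being closed, irreducible and $B_N$-stable -- is the closure of a single $B_N$-orbit, hence a Schubert variety. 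The explicit combinatorial identification of these components and their Schubert labels in terms of the intervals $[i_k,j_k]$ is, I expect, the main technical obstacle: one would attach to each maximal cell a canonical subrepresentation built directly from the catenoid data and then verify by dimension count that no further components appear.

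Finally, for the $\Aut(M)$-statements: functoriality of the minimal projective resolution lifts each $\phi\in\Aut(M)$ to a representation-automorphism $\tilde\phi\in\mathrm{GL}(R_n)$ of $R$ preserving $\iota(Q)$, hence preserving both flags; conversely, any $g\in B_N\subseteq\mathrm{Stab}(R_\bullet)\cap\mathrm{Stab}(Q_\bullet)$ is automatically a representation-automorphism of $R$ preserving $\iota(Q)$ and descends to an element of $\Aut(M)$. Combined with the rank formulas $\dim N_l=\dim E_l-\dim(E_l\cap\iota(Q_l))$ and $\mathrm{rank}(N_l\to N_m)=\dim E_l-\dim(E_l\cap\iota(Q_m))$ -- which exhibit the isomorphism class of $N=\pi(E)\subseteq M$ as a function of intersection data with $R_\bullet$ and $\iota(Q_\bullet)$ -- these compatibilities yield both the $\Aut(M)$-stability of each cell (the lift preserves the combinatorial invariants cutting out the cell) and the constancy of the isomorphism class of the subrepresentation on each cell.
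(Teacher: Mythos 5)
Your reduction of Borel-stability to the total orderedness (under inclusion) of the members of the two flags $R_\bullet$ and $\iota(Q_\bullet)$, followed by the observation that a failure of comparability between some $R_l$ and some $\iota(Q_{l'})$ is exactly an incomparable pair of summands in the Auslander--Reiten order, is correct and is in substance the paper's own argument: Theorem~\ref{Thm:SchubQuivGrass} phrases the criterion as the existence of a complete flag refining both $R_\bullet$ and $Q_\bullet$ and then constructs the adapted basis explicitly in both directions. The Bruhat-theoretic derivation of the cellular decomposition and of the fact that irreducible components are Schubert varieties also matches Lemma~\ref{BN} and Theorem~\ref{Thm:CellDec}. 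However, you explicitly defer the ``explicit description'' of the components to a dimension count you do not perform; the paper supplies it through the combinatorics of $(M,\be)$-compatible permutations, their minimal-length representatives $W^0(M,\be)$ and the maximal elements $S(M,\be)$ in the induced Bruhat order, so this is a genuine omission rather than a routine verification. (Also, ``$\Gr_{\dimv M}(M)$'' in your converse should read $\Gr_{\mathbf{f}-\dimv Q}(M)$; as written it is a point.)

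The real gap is in your last paragraph. A general $\phi\in\Aut(M)$ lifts only into $\mathcal{P}_M\subseteq\Aut(R)$, i.e.\ into the standard parabolic $P\supseteq B_N$ with blocks $a_1,\dots,a_r$ (Propositions~\ref{Prop:Action} and~\ref{Prop:ActionCatenoid}); the lift preserves $R_\bullet$ and $\iota(Q_\bullet)$ but \emph{not} the complete reference flag $F_\bullet$ whose intersection numbers cut out the $B_N$-cells. Hence ``the lift preserves the combinatorial invariants cutting out the cell'' is unjustified, and in fact fails: for $M=M[2,2]^{\oplus 3}$ over $\Gamma_2$ one has $Q=0$, $\Gr_{(0,1)}(M)=\mathbb{P}^2$, $\Aut(M)=GL_3$ acting transitively, and no Schubert cell is $GL_3$-stable, while your invariants $\dim(E_l\cap R_m)$ and $\dim(E_l\cap\iota(Q_m))$ do not separate the cells at all. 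What your rank formulas do prove --- and what the ``in particular'' clause actually requires --- is the opposite containment: since $\Psi(B_N)\subseteq\Aut(M)$ and the two actions agree under $\varphi$, each $B_N$-cell lies inside a single $\Aut(M)$-orbit, and therefore the isomorphism class of $\pi(E_\bullet)$ is constant on each cell. You should replace the cell-by-cell stability assertion with this containment statement, which is the content the paper's Propositions~\ref{Prop:Action} and~\ref{Prop:ActionCatenoid} establish.
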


It is a natural question to ask when a SQG is a Schubert variety, or, in other words, when it is irreducible. 
In general, a SQG can be factored into a product of several SQGs associated with representations of smaller quivers 
of equioriented type $A$. If this is not the case we say that the SQG is \emph{simple} 
(see Definition~\ref{Def:Simple}). It is hence enough to answer this question for a simple SQG. 
\begin{thm}
Let $M$ be catenoid. A simple SQG $Gr_\mathbf{e}(M)$  is a Schubert variety if and only if there exists a projective representation $P$ and an injective representation $I$ such that 
$M$ fits into an exact sequence $\xymatrix@1@C=8pt{0\ar[r]&P\ar[r]&M\ar[r]&I\ar[r]&0}$ and $\mathbf{e}=\mathbf{dim}\,P$. In this case, every point of the open $B_N$-orbit corresponds to a subrepresentation of $M$ isomorphic to $P$. 
\end{thm}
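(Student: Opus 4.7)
The plan is to combine the cellular decomposition and description of irreducible components from the previous theorem with an analysis of the generic isomorphism class of subrepresentation. Concretely, since the previous theorem identifies $B_N$-orbits with the cells of a cellular decomposition whose closures are Schubert varieties, and each cell consists of subrepresentations of a single isomorphism class (by $\mathrm{Aut}(M)$-stability), the SQG $Gr_{\mathbf{e}}(M)$ is a Schubert variety if and only if it is irreducible; equivalently, if it admits a unique top-dimensional $B_N$-cell; equivalently, if there is a unique isomorphism class of subrepresentation $N\subseteq M$ of dimension vector $\mathbf{e}$ whose $\mathrm{Aut}(M)$-orbit is dense.

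For the ``if'' direction I start from a short exact sequence $0\to P\to M\to I\to 0$ with $\mathbf{e}=\dim P$. The image of $P$ gives a point $[P]\in Gr_{\mathbf{e}}(M)$, and projectivity of $P$ yields $\mathrm{Ext}^1(P,M/P)=0$, so the tangent space at $[P]$ has dimension $\dim\mathrm{Hom}(P,I)$. A direct count will match this with the expected dimension of $Gr_{\mathbf{e}}(M)$, showing that $\mathrm{Aut}(M)\cdot[P]$ is open. Simplicity of the SQG then prevents a competing top-dimensional cell, since any such would decompose the variety along the chain of summands of $M$ into a nontrivial product and violate simplicity; irreducibility follows.

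For the ``only if'' direction I take $N\subseteq M$ representing the unique open cell. Writing $M=\bigoplus_i M_i^{\oplus a_i}$ with the $M_i$ forming an AR-chain, the explicit indexing of components in the previous theorem realizes the choice of $N$ as a combinatorial ``profile'' along this chain. Uniqueness of the profile, combined with the characterization in type $A_n$ of projective indecomposables as those supported up to vertex $n$ and injectives as those supported from vertex $1$, will force every indecomposable summand of $N$ to be projective and every summand of $M/N$ to be injective: a non-projective summand of $N$, or a non-injective summand of $M/N$, would allow a second profile of the same dimension to exist, contradicting uniqueness. Setting $P:=N$ and $I:=M/N$ then yields the desired sequence, and the final assertion follows immediately from $\mathrm{Aut}(M)$-stability of cells.

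The main obstacle I anticipate is the ``only if'' direction: building the bridge from the combinatorial uniqueness statement to the structural statement that $N$ is projective with injective cokernel. My plan is to encode the profile of $N$ as a lattice path along the chain of AR-summands of $M$ and to show that simplicity, together with uniqueness of the open cell, singles out the extremal path corresponding to a projective subrepresentation. The delicate point will be handling pairs of consecutive summands in the chain where a non-projective generic subrepresentation could a priori compete with the projective one; ruling this out will require careful use of the simplicity hypothesis.
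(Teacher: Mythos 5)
Your opening reduction --- a SQG is a closed $B_N$-stable subvariety of a flag variety, hence a union of Schubert varieties, hence a Schubert variety iff it is irreducible --- is exactly the paper's starting point. But already your next ``equivalently'' is wrong: irreducibility is \emph{not} equivalent to the existence of a unique top-dimensional cell. The components are the closures of the cells indexed by the \emph{maximal} elements of $S(M,\be)$ in the Bruhat-type order, and these can have different dimensions; in Example~\ref{Ex:Complexes} (which is a \emph{simple} SQG) there are Fibonacci-many pairwise incomparable maximal cells of varying dimensions, so a unique top-dimensional cell can coexist with reducibility. The same example shows that the mechanism you propose for the ``if'' direction fails: simplicity does not prevent competing maximal cells, and a second component does not force the variety to factor as a product. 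Moreover, even granting the tangent-space count, openness of $\mathrm{Aut}(M)\cdot[P]$ in its component says nothing about other components, so irreducibility does not follow. In the ``only if'' direction you candidly defer the decisive combinatorial step (``will force'', ``would allow a second profile''), so neither implication is actually established.

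The paper's route, which I recommend you adopt, interposes an explicit numerical criterion (Theorem~\ref{Prop:Irr}): with $f_a=e_a+q_a$, the poset $W^0(M,\be)$ of admissible tuples $\bigl(w(\{1,\dots,f_a\})\bigr)_a$ squeezed between $\{1,\dots,q_a\}$ and $\{1,\dots,r_a\}$ has a \emph{unique} maximal element iff the componentwise-maximal candidate $\{1,\dots,q_a\}\cup\{r_a-e_a+1,\dots,r_a\}$ is itself admissible (i.e.\ nested along $a$), which happens iff $r_a-e_a\ge r_{a+1}-e_{a+1}$ for all $a$. This one inequality then carries both directions: combined with simplicity it yields $e_a\le e_{a+1}$ (so a projective $P$ with $\dimv P=\be$ exists), $e_a+q_{a+1}<r_a$ (so $P$ embeds into $M$), and $d_a-e_a\ge d_{a+1}-e_{a+1}$ (so $M/P$ is injective); conversely the existence of such an exact sequence forces the inequality. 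The final assertion about the open orbit then comes from identifying the unique maximal fixed point $p(w)$ (made explicit in Proposition~\ref{Prop:IrrCompIrr}) with the subrepresentation $P$ and using $\mathrm{Aut}(M)$-stability of cells, as you say. Your ``profile along the AR-chain'' is the right picture, but the content of the theorem is precisely the equivalence between uniqueness of the maximal profile and the inequality, and that is the piece your proposal leaves unproved.
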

Given a catenoid representation $M$ and a dimension vector $\be$, we give a 
combinatorial description of the
set of irreducible components of $\Gr_\be(M)$. For each irreducible component, we identify the
partial flag variety containing the corresponding Schubert variety, and the corresponding Weyl group element. We show that the natural action of $\mathrm{Aut}(M)$ on  $Gr_\mathbf{e}(M)$ becomes, after  the natural embedding into a flag manifold,  the action of the parabolic subgroup of $GL_N$ consisting of all automorphisms of $R$ which fix $\iota(Q)\subseteq R$. 
We also compute the Poincar\'e polynomials of SQGs.  

The paper is organized as follows. In Section \ref{setup} we fix the notation and introduce 
the objects of study. In Section \ref{Schubert} we provide a link to the theory of Schubert varieties.
In Section~\ref{ic} we describe the irreducible components of SQGs.
In Section \ref{Euler} we compute their Poincar\'e polynomials.

\section{The setup}\label{setup}
\subsection{General definitions}
Let $\Gamma=\Gamma_n:=\xymatrix{1\ar[r]&2\ar[r]&\cdots\ar[r]&n}$ be an equioriented quiver of type $A_n$, for some fixed integer $n\geq1$.
The indecomposable representations for this quiver
are parametrized by (integer) subintervals  $[i,j]$ of the interval $[1,n]$ ($1\leq i\leq j\leq n$). We denote the corresponding indecomposable representation by $M[i,j]$, 
and we say that the \emph{support} of $M[i,j]$ is the interval $[i,j]$. 
Any representation $M$ can be written in an essentially unique way as  
$M=\bigoplus_{1\leq i\leq j\leq n} M[i,j]^{\oplus m_{ij}}$ for some non--negative integers $m_{ij}$. 
We denote the dimension vector
$\mathbf{d}=(d_1,\cdots, d_n)$ of $M$ by $\dimv\, M$. We note that, for every vertex $i$, 
the projective cover of the simple representation
$S_i=M[i,i]$ is $P_i:=M[i,n]$, and its injective envelope is $I_i:=M[1,i]$.

The Auslander--Reiten quiver of $\Gamma$ is described as follows: its vertices are parametrized by pairs $(i,j)$ for $1\le i\le j\le n$, and the arrows
are $(i,j)\to (i-1,j)$ and $(i,j)\to (i,j-1)$ (whenever the targets are well defined).

We are interested in the special class of $\Gamma$--representations defined as follows. 
\begin{definition}
We say that a $\Gamma$--representation $M$ is \emph{catenoid} if all the distinct indecomposable direct summands of $M$ belong to an oriented  
connected path of the Auslander--Reiten quiver of $\Gamma$. 
\end{definition}

Recall that a subset $\bp\subset \{(i,j): 1\le i\le j\le n\}$ is a \emph{Dyck path} on $[1,n]$ if
we can order the elements of $\bp$ as $(p_1,\dots,p_r)$ in such a way that $p_1=(1,1)$, $p_r=(n,n)$, and
if $p_m=(i_m,j_m)$, then $p_{m+1}=(i_m,j_m+1)$ or $p_{m+1}=(i_m+1,j_m)$. We say that a representation $M$ is supported on $\bp$ if $(i,j)\in\bp$ for each indecomposable direct summand $M[i,j]$
of $M$. The oriented connected paths of the Auslander-Reiten quiver of $\Gamma$ are precisely the Dyck paths (on $[1,n]$).
It follows that $M$ is catenoid if and only if it is supported on a single Dyck path.  

Another way to view this definition is the following. Consider the set $\mathcal{P}([1,n])$ of all connected 
sub-intervals of the interval $[1,n]$ (those are precisely the vertices of the Auslander-Reiten quiver of $\Gamma$). We endow $\mathcal{P}([1,n])$ with the following partial ordering: 
\begin{equation}\label{Eq:DefPartOrder1}
\xymatrix{
[i,j]\leq [k,\ell]\ar@{<=>}^(.45){def}[r]&i\leq k\textrm{ and }j\leq\ell
}
\end{equation}
For example, for $n=4$, we have:
$$
\xymatrix@C=5pt@R=5pt{
&&&[1,4]\ar[dr]^\geq&&&\\
&&[2,4]\ar[dr]^\geq\ar[ur]^\geq&&[1,3]\ar[dr]^\geq&&\\
&[3,4]\ar[dr]^\geq\ar[ur]^\geq&&[2,3]\ar[dr]^\geq\ar[ur]^\geq&&[1,2]\ar[dr]^\geq&\\
[4,4]\ar[ur]^\geq&&[3,3]\ar[ur]^\geq&&[2,2]\ar[ur]^\geq&&[1,1]
}
$$
We have an induced partial order on the set of (isoclasses of) indecomposable $\Gamma$--representations: 
\begin{equation}\label{Eq:DefPartOrder}
\xymatrix{
M[i,j]\leq M[k,\ell]\ar@{<=>}^(.55){def}[r]&[i,j]\leq [k,\ell].
}
\end{equation}

\begin{lem}
$M[i,j]\leq M[k,\ell]$ if and only if there exists an oriented (connected) path from 
$M[k,\ell]$ to $M[i,j]$ in the Auslander--Reiten quiver of $\Gamma$. 
In particular, a representation $M$ is catenoid if and only if the set of its distinct direct summands form a totally ordered 
set with respect to the partial order \eqref{Eq:DefPartOrder}. 
\end{lem}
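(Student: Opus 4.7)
The plan is to prove the two claims in order. For the first \emph{if and only if} about paths in the Auslander--Reiten quiver, the key observation is that every AR arrow decreases exactly one of the two coordinates by $1$ while keeping the other fixed. For the \emph{forward} direction, a path from $(k,\ell)$ down to $(i,j)$ therefore telescopes to the inequalities $i\leq k$ and $j\leq \ell$, which is precisely the definition of $[i,j]\leq [k,\ell]$.

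For the \emph{converse}, assuming $i\leq k$ and $j\leq \ell$, I will exhibit an explicit AR path. The only subtlety is that intermediate vertices $(p,q)$ must satisfy $p\leq q$ in order to be vertices of the AR quiver. A safe recipe is to decrease the first coordinate first: the sequence
\[
(k,\ell)\to (k-1,\ell)\to \cdots \to (i,\ell)\to (i,\ell-1)\to \cdots \to (i,j)
\]
stays inside the valid region, since in the first leg one has $p\leq k\leq \ell = q$, and in the second leg $p=i\leq j\leq q$. This yields the desired AR path.

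For the \emph{in particular} clause, I will combine the first statement with the already-noted identification between catenoid representations and representations whose distinct indecomposable summands lie on a single Dyck path. If $M$ is catenoid, any two distinct summands $M[i_1,j_1]$ and $M[i_2,j_2]$ lie on a common Dyck path $\bp=(p_1,\ldots,p_r)$, so one appears before the other, say at positions $s<t$. Reading the sub-path $p_s,p_{s+1},\ldots,p_t$ in reverse gives an AR path from $(i_2,j_2)$ to $(i_1,j_1)$, and the first statement then yields $M[i_1,j_1]\leq M[i_2,j_2]$; hence the summands form a chain. Conversely, if the distinct summands form a chain $M[i_1,j_1]<\cdots<M[i_r,j_r]$, the first statement furnishes AR paths between consecutive terms; concatenating their reversals produces a lattice path from $(i_1,j_1)$ to $(i_r,j_r)$ that increases one coordinate at a time, which I then extend by an initial segment from $(1,1)$ up to $(i_1,j_1)$ and a final segment from $(i_r,j_r)$ up to $(n,n)$ to obtain a Dyck path on which $M$ is supported.

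There is no serious obstacle here; the only delicate point is ensuring the constructed AR path never exits the region $p\leq q$, which is cleanly handled by the ``decrease $i$ first, then $j$'' ordering.
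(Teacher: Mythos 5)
Your proof is correct and takes essentially the same route as the paper: the forward implication by telescoping the unit coordinate decrements of the AR arrows, and the converse by exhibiting the explicit path in the normal form ``decrease the first coordinate, then the second'' (the paper's $\alpha_+^s\circ\alpha_-^t$). You are in fact more careful than the paper about checking that intermediate vertices satisfy $p\le q$, and you spell out the Dyck-path argument for the \emph{in particular} clause, which the paper dispatches with ``the rest follows.''
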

\begin{proof}
Let us denote the arrows $(x,y)\rightarrow (x-1,y)$ of the Auslander--Reiten quiver by $\alpha_+$, and the arrows $(x,y)\rightarrow (x,y-1)$ by $\alpha_-$. Let $(k,\ell)$ and $(i,j)$ be two vertices. Then there is a path from $(k,\ell)$ to $(i,j)$ if and only if $i=k-s$ and $j=\ell-t$ for some $0\leq s\leq k$ and $0\leq t\leq \ell$. Indeed, every connected path starting at $(k,\ell)$  is obtained from the path $p=\alpha_+^s\circ \alpha_-^t$ by applying repeatedly the local replacement $\xymatrix@1@C=10pt{\alpha_-\circ \alpha_+\,\ar@{~>}[r]&\,\alpha_+\circ \alpha_-}$.  This shows that there is a path from $(k,\ell)$ to $(i,j)$ if and only if $(i,j)\leq (k,\ell)$ as desired. The rest follows. 
\end{proof}

\begin{example}
Examples of catenoid representations are $P\oplus I$, where $P$ is projective and $I$ is injective.
In \cite{CFR,CFR2}, the quiver Grassmannians $\Gr_{\dimv P}(P\oplus I)$ were studied (see Example~\ref{Ex:DegFlag}). Another class of examples is provided by direct sums of one- and two-dimensional indecomposables, or, in other words, representations of $\Gamma$ forming a complex (see Example~\ref{Ex:Complexes}).
\end{example}

\section{Quiver Grassmannians associated with catenoid representations}\label{Schubert}
We retain notation of the previous section. So, $\Gamma$ is an equioriented quiver of type $A_n$. Given a $\Gamma$--representation $M$ and a dimension vector $\be\in\ZZ_{\geq0}^n$ we denote by $\Gr_\be(M)$ the projective varieties consisting of all subrepresentations of $M$ of dimension vector $\be$ (see \cite[Section~2]{CFR} for details).

There is a natural way to realize $\textrm{Gr}_\mathbf{e}(M)$ as a closed subvariety of 
a partial flag manifold, generalizing the idea of \cite{CL}. Let us describe this realization. 
A minimal projective resolution of $M$ is given by 
\begin{equation}\label{Eq:ProjReso}
\xymatrix@C=8pt{0\ar[r]&Q\ar^\iota[r]&R\ar^\pi[r]&M\ar[r]&0}
\end{equation}
where $R=\bigoplus_{1\leq i\leq n}P_i^{[M,S_i]}$ and $Q=\bigoplus_{1\leq i\leq n}P_i^{[M,S_i]^1}$, and we use the standard notation 
$$
\begin{array}{ccc}
[M,N]:=\textrm{dim Hom}_\Gamma(M,N)&\textrm{and}&[M,N]^1:=\textrm{dim Ext}^1_\Gamma(M,N).
\end{array}
$$
We note that the arrows of $\Gamma$ act as injective linear maps on every projective 
$\mathbf{C}\Gamma$--module (recall that the category of $\Gamma$--representations is equivalent to the category 
of modules over the path algebra of $\Gamma$, which is the $\mathbf{C}$--vector space generated by paths of $\Gamma$ with obvious multiplicative structure), and hence 
$R$ is a partial flag  
$R=R_\bullet:=R_1\subseteq R_2\subseteq\cdots \subseteq R_{n-1}\subseteq R_n$. 
Here the $R_i$'s are finite dimensional 
vector spaces of dimension ${\rm dim}\, R_i=:r_i=\sum_{k\leq i}[M,S_k]$ (for all $i=1,\cdots,n$). 
In particular the dimension $N$ of the vector space $R_n$ is precisely the number of indecomposable direct summands of $M$.
Similarly, the representation $Q$ is a partial flag 
$Q=Q_\bullet:=Q_1\subseteq Q_2\subseteq\cdots \subseteq Q_{n-1}\subseteq Q_n$, where $Q_i$ is a  vector subspace of $R_i$ 
of dimension 
$
{\rm dim}\, Q_i=:q_i=\sum_{k\leq i}[M,S_k]^1
$ 
(for all $i=1,\cdots,n$).
Note that the vector space $Q_1$ is zero: indeed, $[M,S_1]^1=0$ since $S_1$ is injective. 

For a vector space $V$ of dimension $m$, and an increasing sequence $\mathbf{f}=(f_1,f_2,\cdots, f_n)$ of non--negative integers 
$0\leq f_1\leq f_2\leq\cdots\leq f_n$ we denote by $\textrm{Fl}_\mathbf{f}(V)$ the variety of partial flags 
$\mathcal{U}_\bullet:=(\mathcal{U}_1\subseteq\mathcal{U}_2\subseteq\cdots\subseteq\mathcal{U}_n\subseteq V)$ in V with $\textrm{dim } \mathcal{U}_i=f_i$.

We consider the dimension vector $\mathbf{f}:=\mathbf{e}+\mathbf{dim}\,Q$, and  the  variety 
$Gr_\mathbf{f}(\xymatrix@1@C=15pt{Q\ar|-{\iota}[r]&R})\subset \mathcal{F}l_\mathbf{f}(R_n)$ of partial flags $\mathcal{U}_\bullet$ in $R_n$
such that
\begin{equation}
\iota_j(Q_j)\subseteq\mathcal{U}_j\subseteq R_j\qquad(j=1,\cdots, n).
\end{equation}
The variety $Gr_\mathbf{f}(\xymatrix@1@C=15pt{Q\ar|-{\iota}[r]&R})$ is a closed subvariety of 
$\textrm{Fl}_\mathbf{f}(R_n)$, and the following result provides the required closed embedding 
$\Gr_\mathbf{e}(M)\subset \textrm{Fl}_\mathbf{f}(R_n)$. 
\begin{prop}\label{Prop:IsoQuiveFlagVar}
The map $\varphi:Gr_\mathbf{e}(M)\rightarrow \mathcal{F}l_{\mathbf{e}+\mathbf{dim}\,Q}(R_n)$  defined by
$$
\varphi(N)=(\pi^{-1}(N))_1\subseteq (\pi^{-1}(N))_2\subseteq\cdots\subseteq(\pi^{-1}(N))_n
$$
is a closed embedding of schemes. The image is the variety $Gr_\mathbf{f}(\xymatrix@1@C=15pt{Q\ar|-{\iota}[r]&R})$.
\end{prop}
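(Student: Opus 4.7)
The plan is to exhibit an explicit two-sided regular inverse $\psi$ to $\varphi$ whose image is all of $\Gr_\be(M)$, and to observe that the target $\Gr_\mathbf{f}(Q\hookrightarrow R)$ is a closed subvariety of $\mathcal{F}l_\mathbf{f}(R_n)$.

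I would first check that $\varphi$ lands in $\Gr_\mathbf{f}(Q\hookrightarrow R)$. The key observation is that, since every arrow of the projective $R$ acts as an inclusion $R_j\hookrightarrow R_{j+1}$, a subrepresentation of $R$ is the same datum as a flag $\mathcal{U}_1\subseteq\cdots\subseteq \mathcal{U}_n$ in $R_n$ with $\mathcal{U}_j\subseteq R_j$. For $N\in \Gr_\be(M)$ set $\mathcal{U}_j:=\pi_j^{-1}(N_j)$: the inclusion $N_j\subseteq N_{j+1}$ gives $\mathcal{U}_j\subseteq \mathcal{U}_{j+1}$ (because $\pi$ is a morphism of representations), and the surjectivity of $\pi_j$ together with $\ker\pi_j=\iota_j(Q_j)$ yields $\dim\mathcal{U}_j=e_j+q_j=f_j$ together with $\iota_j(Q_j)\subseteq \mathcal{U}_j\subseteq R_j$. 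Hence $\varphi(N)\in\Gr_\mathbf{f}(Q\hookrightarrow R)$.

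Next I would introduce $\psi(\mathcal{U}):=(\pi_1(\mathcal{U}_1),\dots,\pi_n(\mathcal{U}_n))$ as the candidate inverse and check that it lands in $\Gr_\be(M)$. The subrepresentation property follows from $\mathcal{U}_\bullet$ being a flag and $\pi$ a morphism of representations; the dimension at vertex $j$ equals $f_j-q_j=e_j$ since $\ker\pi_j=\iota_j(Q_j)\subseteq \mathcal{U}_j$. The identities $\pi_j(\pi_j^{-1}(N_j))=N_j$ (by surjectivity of $\pi_j$) and $\pi_j^{-1}(\pi_j(\mathcal{U}_j))=\mathcal{U}_j+\ker\pi_j=\mathcal{U}_j$ (using $\ker\pi_j\subseteq \mathcal{U}_j$) then show that $\varphi$ and $\psi$ are mutually inverse bijections.

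Finally, both $\varphi$ and $\psi$ are morphisms of schemes: at each vertex they come from the standard regular morphisms of Grassmannians induced by a fixed linear surjection (taking preimage and direct image of a subspace), and the flag/incidence conditions cut out closed subvarieties. Thus $\varphi$ is a scheme isomorphism onto $\Gr_\mathbf{f}(Q\hookrightarrow R)$, and since this image is closed in $\mathcal{F}l_\mathbf{f}(R_n)$ (being cut out by the linear conditions $\iota_j(Q_j)\subseteq \mathcal{U}_j\subseteq R_j$), $\varphi$ is a closed embedding. The one subtle point is upgrading the bijection of points to a scheme-theoretic inverse; this reduces to the standard assertion that, for a linear surjection $W\twoheadrightarrow V$, the preimage operation defines a regular isomorphism between the Grassmannian of $k$-planes of $V$ and the Grassmannian of $(k+\dim\ker)$-planes of $W$ containing the kernel, which extends functorially to the flag setting.
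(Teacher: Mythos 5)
Your proposal is correct and follows essentially the same route as the paper: the paper's (very terse) proof likewise observes that $N\mapsto\pi^{-1}(N)$ is an isomorphism onto the subrepresentations of $R$ of dimension vector $\mathbf{f}$ containing $\ker\pi=\iota(Q)$, and delegates the scheme-theoretic verification to a standard reference. You merely spell out the details the paper declares straightforward (the explicit inverse $\mathcal{U}\mapsto\pi(\mathcal{U})$, the dimension counts, and the reduction to the preimage isomorphism of Grassmannians under a linear surjection), all of which check out.
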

\begin{proof}
Since $\pi$ is surjective, the map $N\mapsto \pi^{-1}(N)$ is an isomorphism of $\Gr_\mathbf{e}(M)$ with the variety 
 of all sub--representations of $R$ which contain $\textrm{Ker }\pi=\iota(Q)$,  
and whose dimension vector is $\mathbf{f}=\mathbf{e}+\dimv\, \textrm{Ker}\pi$. It is straightforward to verify that it is a closed embedding of schemes (see e.g. \cite[Section~3.2]{Haupt}). 
\end{proof}

In the special case when the subvariety $Gr_\mathbf{f}(\xymatrix@1@C=15pt{Q\ar|-{\iota}[r]&R})$ of $\mathcal{F}l_\mathbf{f}(R_n)$ is closed under the action of a Borel subgroup of $GL(R_n)$, we call it a \emph{Schubert quiver Grassmannian} (SQG for short). The following result provides a characterization of SQGs. 

\begin{thm}\label{Thm:SchubQuivGrass}
The subvariety $Gr_\mathbf{f}(\xymatrix@1@C=15pt{Q\ar|-{\iota}[r]&R})\subseteq \mathcal{F}l_\mathbf{f}(R_n)$ is closed under the action 
of a Borel subgroup of $GL(R_n)$ if and only if $M$ is catenoid. 
\end{thm}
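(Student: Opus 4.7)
The plan is to reduce Borel-stability of $Gr_\mathbf{f}(Q\hookrightarrow R)$ to a chain condition on the $2n$ distinguished subspaces $R_i$ and $\iota(Q_j)$ of $R_n$, and then to match that chain condition with the catenoid property of $M$.

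First, I exhibit each $R_i$ and each $\iota(Q_j)$ as a coordinate subspace with respect to a natural decomposition of $R_n$ indexed by the summands of $M$. Listing the indecomposable summands of $M$ with multiplicity as $M[a_s,b_s]$ for $s=1,\dots,N$, each summand contributes a copy of $P_{a_s}=M[a_s,n]$ to $R$, hence a line $L_s\subset R_n$ (the value of this copy at vertex $n$). From the explicit form of the minimal projective resolution one reads off
\[
R_i=\bigoplus_{s:\,a_s\leq i}L_s,\qquad \iota(Q_j)=\bigoplus_{s:\,b_s<j}L_s,
\]
the second identity because a non-projective summand $M[a_s,b_s]$ with $b_s<n$ contributes the syzygy $P_{b_s+1}$ to $Q$, mapped by $\iota$ into the $L_s$-component of $R$ at levels $\geq b_s+1$.

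Second, I translate Borel-stability into a chain condition. A Borel $B\subset GL(R_n)$ fixes a subspace $W\subseteq R_n$ if and only if $W$ is a term of the unique complete flag stabilized by $B$; hence $B$ fixes every $R_i$ and every $\iota(Q_j)$ iff these $2n$ subspaces are pairwise comparable, i.e.\ form a chain under inclusion. I then claim that $Gr_\mathbf{f}(Q\hookrightarrow R)$ is $B$-stable iff $B$ fixes each $R_i$ and each $\iota(Q_j)$: sufficiency is immediate from the defining inclusions of the subvariety, and for necessity one observes that (outside the degenerate slots where $e_j=0$ or $f_j=\dim R_j$) the supremum and infimum of the subspaces appearing as $\mathcal{U}_j$ at some point of $Gr_\mathbf{f}(Q\hookrightarrow R)$ are exactly $R_j$ and $\iota(Q_j)$, so $B$ must stabilize both.

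Third, I match the chain condition on $\{R_i,\iota(Q_j)\}$ with the catenoid property. Because the bounding subspaces are coordinate in $R_n=\bigoplus_s L_s$, two of them are nested iff the corresponding index sets $A_i=\{s:a_s\leq i\}$ and $B_j=\{s:b_s<j\}$ are nested in $\{1,\dots,N\}$. If $M$ is catenoid I order the summands so that $(a_s,b_s)$ is componentwise non-decreasing in $s$; then each $A_i$ and each $B_j$ is an initial segment of $\{1,\dots,N\}$, and any two initial segments are nested. Conversely, if $M$ is not catenoid there are two incomparable summands $M[a,b]$ and $M[c,d]$ with $a<c$ and $d<b$; taking $i=a$ and $j=b$, the index of $M[a,b]$ lies in $A_i\setminus B_j$ and the index of $M[c,d]$ in $B_j\setminus A_i$, so $R_i$ and $\iota(Q_j)$ are incomparable. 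The main obstacle I anticipate is the ``only if'' in Step 2: one must verify the supremum/infimum characterization of the bounding subspaces in all dimension regimes of $\mathbf{e}$, since without such care a Borel could in principle stabilize the subvariety without fixing every individual bounding subspace.
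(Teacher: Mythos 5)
Your skeleton is the same as the paper's: reduce Borel--stability of $\Gr_{\mathbf{f}}(Q\to R)$ to the condition that the $2n$ subspaces $R_i,\iota(Q_j)\subseteq R_n$ form a chain (equivalently, admit a common refining complete flag), and then show that this chain condition is equivalent to $M$ being catenoid. Your Steps~1 and~3 are correct and are in substance identical to the paper's argument, which builds the same decomposition $R_n=\bigoplus_s L_s$ via an adapted basis $\{v_k^{(i)}\}$ and the submodules $R(k)\supseteq Q(k)$; your reformulation through the index sets $A_i=\{s:a_s\le i\}$, $B_j=\{s:b_s<j\}$ and nested initial segments is if anything cleaner, and your extraction of an incomparable pair $R_a$, $\iota(Q_b)$ from two incomparable summands $M[a,b]$, $M[c,d]$ is right.

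The one genuine issue is the necessity half of your Step~2, and you have correctly located it — but it is a real gap, not just a loose end. First, for a fixed $\be$ the implication ``subvariety $B$--stable $\Rightarrow M$ catenoid'' is simply false without a nondegeneracy hypothesis: take $\Gamma=\Gamma_3$, $M=M[1,3]\oplus M[2,2]$ (not catenoid) and $\be=(0,1,0)$. Then $\mathbf{f}=(0,1,1)$ forces $\mathcal{U}_2=\mathcal{U}_3=\iota(Q_3)$, so $\Gr_{\mathbf{f}}(Q\to R)$ is a single point, stable under any Borel refining $0\subset\iota(Q_3)\subset R_3$, none of which fixes $R_1$. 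So the statement must be read either for all $\be$ at once or under the hypotheses of Definition~\ref{Def:Simple}. Second, even after excluding $e_j=0$ and $f_j=\dim R_j$, your sup/inf identities are not yet true slot by slot: whenever $f_j=f_{j+1}$ every point has $\mathcal{U}_j=\mathcal{U}_{j+1}\supseteq\iota(Q_{j+1})$, so the infimum of the $j$-th members is at least $\iota(Q_{j+1})$, which may strictly contain $\iota(Q_j)$; dually, when $e_j>e_{j+1}$ a generic $\mathcal{U}_j$ need not extend upward, so the supremum at slot $j$ can fall short of $R_j$. Closing this requires checking the sup/inf identities only for the specific indices $a,b$ coming from an incomparable pair of summands, under the simplicity assumptions (the condition $r_a>q_{a+1}$ of Definition~\ref{Def:Simple} is exactly what is needed). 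To be fair, the paper's own proof does not address this direction at all — it proves ``catenoid $\iff$ both flags fixed by a common Borel'' and passes to $B$--stability of the subvariety with the words ``hence the claim holds true'' — so your attempt is, on this point, more honest than the original, but it is not yet a proof.
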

\begin{proof}
We show that $M$ is catenoid if and only if 
there exists a complete flag $F_\bullet$ which is a refinement of both $R_\bullet$ and $Q_\bullet$.
In other words, $M$ is catenoid if and only if there exists an 
ordered basis $\{v_1,\cdots, v_{N}\}$ of the vector space $R_n$ such that $R_i=\textrm{span}\{v_1,\cdots, v_{r_i}\}$ 
and $Q_i=\textrm{span}\{v_1,\cdots, v_{q_i}\}$ for every $i=1,\cdots, n$. This is equivalent to the fact  
that both $R_\bullet$ and $Q_\bullet$ are fixed by the  Borel subgroup of upper triangular matrices, and hence the claim holds true.

If there exists such a basis, then the $\mathbf{C}\Gamma$--module structure 
on $R$ is given naturally as follows: the underlying vector space is $\oplus_{i=1}^nR_i$ generated by the induced 
basis 
\[
\mathcal{B}=\{v_1^{(1)},\cdots, v_{r_1}^{(1)}\}\cup\{v_1^{(2)},\cdots, v_{r_2}^{(2)}\}\cup\cdots\cup\{v_1^{(n)},\cdots, v_{N}^{(n)},\}
\] 
and the action of the path $(i\rightarrow i+1\rightarrow \cdots\rightarrow j)\in\mathbf{C}\Gamma$ is given by 
the canonical embedding sending $v_k^{(i)}\mapsto v_{k}^{(j)}$. For every $k=1,\cdots, N$, let $R(k)$ be 
the submodule generated by all the basis vectors $v_k^{(i)}$ for $i=i(k),\cdots, n$, where $i(k)$ is the 
minimal vertex of $\Gamma$ such that $k\leq r_{i(k)}$ so that $v_k^{(i(k))}$ is defined. Clearly, $R(k)$ is a 
projective direct summand of $R$ isomorphic to $P_{i(k)}$. Similarly, the module $Q$ admits, by hypothesis, 
a sub-basis $\mathcal{B'}\subset\mathcal{B}$, and we define the submodule $Q(k)$ similarly to $R(k)$. 
Then $Q(k)$ is isomorphic to the projective $P_{j(k)}$, where $j(k)$ is the minimal vertex  such that 
$k\leq q_{j(k)}$ so that $v_k^{(j(k))}$ belongs to $Q$. By construction, $Q(k)$ embeds into $R(k)$,  
and the quotient is $M[i(k),j(k)-1]$. Hence all the indecomposable direct summands 
of $M$ are of the form $M[i(k),j(k)-1]$, for $k=1,\cdots, N$. By construction, we have
$$
\xymatrix{
i(k)\leq i(s)\ar@{<=>}[r]& k\leq s\ar@{<=>}[r]&j(k)\leq j(s)},
$$
It follows that, given two distinct direct summands $M[i(k),j(k)-1]$ and $M[i(s),j(s)-1]$ of $M$ such that $i(k)<i(s)$, we have $k<s$, and hence $j(k)<j(s)$. This proves that all the direct summands of $M$ 
are comparable with respect to the ordering \eqref{Eq:DefPartOrder}, and hence $M$ is catenoid by definition. 

Now suppose that, conversely, $M=\bigoplus_{l=1}^r M(l)^{\oplus a_l}$ is catenoid with indecomposable direct summands $M(1), M(2),\cdots, M(r)$ ordered so that
$M(l)<M(l+1)$ in the partial order \eqref{Eq:DefPartOrder}.  Their tops 
are the simples $S(i_1)$, $S(i_2),\cdots, S(i_r)$ for some indices 
$1\leq i_1\leq i_2\leq\cdots\leq i_r\leq n$, respectively, and their socles are $S(j_1)$, $S(j_2),\cdots, S(j_r)$  for 
some vertices $1\leq j_1\leq j_2\leq\cdots\leq j_r\leq n$, respectively. A minimal projective resolution of 
$M$ is obtained as the direct sum of the minimal projective resolutions of the $M(l)$'s, namely 
$$
\xymatrix{
0\ar[r]&Q(l)\ar^{\iota_l}[r]&R(l)\ar^{\pi_l}[r]&M(l)\ar[r]&0
},
$$
where $R(l)=P_{i(l)}$, $Q_l=P_{j(l)+1}$ and $\iota_l$ and $\pi_l$ are the canonical homomorphisms. 
For every vertex $i$, the vector space $R_i$ is the direct sum of the one--dimensional vector spaces 
$R(1)_i, R(2)_i, \cdots, R(r_i)_i$, and hence it admits a basis $\mathcal{B}_i$ given by 
$v_1^{(i)}, v_2^{(i)}, \cdots, v_{r_i}^{(i)}$, where $v_l^{(i)}$ is the generator of $R(l)_i$. 
By construction, the vector subspace of $R_i$ generated by  $v_1^{(i)}, v_2^{(i)}, \cdots, v_{q_i}^{(i)}$ is $Q_i$. 
\end{proof}


\subsection{Group action on SQGs}
Let $M$ be a $\Gamma$--representation with minimal projective resolution \eqref{Eq:ProjReso}. 
Let $\mathbf{e}$ be a dimension vector for $\Gamma$ and consider the quiver Grassmannian $Gr_\mathbf{e}(M)$. 
The group $\mathrm{Aut}(M)$ (consisting of $\Gamma$--automorphisms of M) naturally acts on $Gr_\mathbf{e}(M)$. 
On the other hand, the closed subvariety $Gr_\mathbf{f}(\xymatrix@1@C=15pt{Q\ar|-{\iota}[r]&R})$ of 
$\mathcal{F}l_\mathbf{f}(R_n)$ (where $\mathbf{f}:=\mathbf{e}+\mathbf{dim}\,Q$) is acted upon by the following group: 
$$
\mathcal{P}_M:=\{f\in Aut(R)|\, f\circ\iota=\iota\}
$$
which consists of those automorphisms of $R$ which fix $\iota(Q)\subseteq R$. Since $Gr_\mathbf{e}(M)$ and 
$Gr_\mathbf{f}(\xymatrix@1@C=15pt{Q\ar|-{\iota}[r]&R})$ are isomorphic under the morphism $\varphi$ of 
Proposition~\ref{Prop:IsoQuiveFlagVar}, it is natural to ask how those two actions are related. 
The next result says that they  coincide. 
\begin{prop}\label{Prop:Action}
There is an exact sequence of groups
$$
\xymatrix{
1\ar[r]&K\ar[r]&\mathcal{P}_M\ar^(.4)\Psi[r]&Aut(M)\ar[r]&1
}
$$
where $K=1_R+\mathrm{Hom}_\Gamma(R,Q)$ and it acts trivially on 
$Gr_{\mathbf{e}+\mathbf{dim}\,Q}(\xymatrix@1@C=15pt{Q\ar|-{\iota}[r]&R})$. In particular the 
action of $\mathcal{P}_M$ on $Gr_{\mathbf{e}+\mathbf{dim}\,Q}(\xymatrix@1@C=15pt{Q\ar|-{\iota}[r]&R})$ 
coincides with the action of $\mathrm{Aut}(M)$ on $Gr_\mathbf{e}(M)$ under the isomorphism $\varphi$. 
\end{prop}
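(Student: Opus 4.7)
The plan is to construct $\Psi$, identify the kernel $K$, prove surjectivity, and finally verify the compatibility of the two actions under $\varphi$.

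For the construction, I would first observe that every $f \in \mathcal{P}_M$ preserves the subrepresentation $\iota(Q) = \ker\pi \subseteq R$, hence descends to a well-defined $\Gamma$-endomorphism $\Psi(f)$ of $M$ characterized by $\Psi(f)\circ\pi = \pi\circ f$. A five-lemma argument applied to two copies of \eqref{Eq:ProjReso} with vertical maps $(f|_{\iota(Q)}, f, \Psi(f))$ promotes $\Psi(f)$ to an element of $\Aut(M)$, and $\Psi$ is manifestly a group homomorphism. I would then identify $K = \ker\Psi$: the condition $\Psi(f) = 1_M$ means $\pi f = \pi$, so $f - 1_R$ has image in $\ker\pi = \iota(Q)$ and therefore factors as $f = 1_R + \iota\circ h$ for a unique $h \in \Hom_\Gamma(R,Q)$. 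Every such element is automatically an automorphism of $R$: by minimality of the projective resolution, $\iota(Q) \subseteq \mathrm{rad}(R)$, so $\iota h$ has image in $\mathrm{rad}(R)$; since $\iota h$ is a module map it sends $\mathrm{rad}^i(R)$ into $\mathrm{rad}^{i+1}(R)$, so by finite-dimensionality $\iota h$ is nilpotent and $(1_R + \iota h)^{-1} = \sum_{k\geq 0}(-\iota h)^k$ is a finite Neumann series. This yields $K = 1_R + \iota\circ \Hom_\Gamma(R,Q)$ as claimed.

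The main substantive step is surjectivity of $\Psi$, which I would establish by a bootstrap from projectivity. Given $g \in \Aut(M)$, projectivity of $R$ applied to $g\circ\pi\colon R \to M$ produces a lift $\tilde f\colon R\to R$ with $\pi\tilde f = g\pi$, and the vanishing $g\pi\iota = 0$ forces $\tilde f(\iota(Q)) \subseteq \iota(Q)$. Applying the same argument to $g^{-1}$ yields $\tilde f'$, and both composites $\tilde f\tilde f'$ and $\tilde f'\tilde f$ satisfy $\pi(-) = \pi$; hence each differs from $1_R$ by a map factoring through $\iota$, and is therefore invertible by the nilpotency argument above. A finite-dimensional endomorphism whose composition on either side is invertible is itself invertible, so $\tilde f \in \Aut(R)\cap \mathcal{P}_M$ with $\Psi(\tilde f) = g$. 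The main obstacle here is precisely this upgrade from \emph{lift exists} (by projectivity of $R$) to \emph{lift is an automorphism} (by nilpotency); without minimality of the resolution one cannot conclude $\iota h$ is nilpotent.

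For the remaining claims I would verify the trivial $K$-action and compatibility as follows. For $f = 1_R + \iota h \in K$ and a flag $\mathcal{U}_\bullet$ satisfying $\iota(Q_j) \subseteq \mathcal{U}_j \subseteq R_j$, the $\Gamma$-morphism $h\colon R \to Q$ sends $R_j$ into $Q_j$, so $\iota h(R_j) \subseteq \iota(Q_j) \subseteq \mathcal{U}_j$; hence $f(\mathcal{U}_j) \subseteq \mathcal{U}_j$, and equality follows by invertibility of $f$. For the compatibility, if $\Psi(f) = g$ then the relation $\pi f = g\pi$ together with invertibility of $f$ gives $f(\pi^{-1}(N)) = \pi^{-1}(g(N))$ for every subrepresentation $N\subseteq M$, that is, $f\cdot\varphi(N) = \varphi(g\cdot N)$. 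Therefore the induced $\mathcal{P}_M/K \cong \Aut(M)$-action on $Gr_{\mathbf{e}+\mathbf{dim}\,Q}(Q\xrightarrow{\iota} R)$ coincides with the natural $\Aut(M)$-action on $Gr_\mathbf{e}(M)$ under the isomorphism $\varphi$ from Proposition~\ref{Prop:IsoQuiveFlagVar}.
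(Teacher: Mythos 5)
Your proof is correct and follows essentially the same route as the paper's: define $\Psi$ by descending $f$ along $\pi$, identify the kernel as $1_R+\Hom_\Gamma(R,Q)$, deduce surjectivity from the lifting property of the minimal projective resolution, and check the compatibility $\pi\circ f(T)=\Psi(f)(\pi(T))$. The only difference is that you spell out standard facts the paper leaves implicit --- that maps factoring through $\ker\pi\subseteq\mathrm{rad}(R)$ are nilpotent, so the relevant lifts are automorphisms, and the explicit verification that $K$ fixes every admissible flag --- which is a welcome elaboration rather than a different argument.
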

\begin{proof}
We define the map $\Psi$ as follows: let $f\in\mathcal{P}_M$. For $m\in M$ we define $\Psi(f)(m):=\pi\circ f(\pi^{-1}(m))$. 
First of all, we notice that $\Psi$ is well--defined: indeed if $r,r'\in\pi^{-1}(m)$ we have $r-r'\in \iota(Q)$ and hence 
$\pi(f(r-r'))=\pi\circ f\circ\iota(r-r')=\pi\circ\iota (r-r')=0$. Moreover $\Psi$ is easily seen to be  
a homomorphism of groups. Let us show that $\Psi$ is onto: let $g\in Aut(M)$, then, since $\pi:R\rightarrow M$ 
is a minimal projective resolution of M, there exists $f\in Aut(R)$ such that $g\circ\pi=\pi\circ f$ 
and thus $g=\Psi(f)$. Now, suppose that $\Psi(f)=1_M$; this means that $\pi\circ f-\pi=0$ or in 
other words: $\pi(f-1_R)=0$ which means that $f-1_R\in Hom_\Gamma(R,Q)$. 

Let $T\in Gr_{\mathbf{e}+\mathbf{dim}\,Q}(\xymatrix@1@C=15pt{Q\ar|-{\iota}[r]&R})$ 
(so that $\pi(T)\in Gr_\mathbf{e}(M)$), and let $f\in \mathcal{P}$. We need to check that
$$
\pi\circ f (T)=\Psi(f)(\pi(T)).
$$
This follows immediately from the definition of $\Psi$ and concludes the proof. 
\end{proof}

We now turn back our attention to catenoids. Let $M=\bigoplus_{l=1}^r M(l)^{\oplus a_l}$ be catenoid. 
Assume that each $M(l)$ is indecomposable and $M(l)<M(l+1)$ in the partial order \eqref{Eq:DefPartOrder}. 
Let us denote by $N=a_1+\cdots+a_r$ the number of indecomposable direct summands of M and we number 
them as $M_1\leq M_2\leq \cdots\leq M_N$ (each $M_i$ is isomorphic to some $M(l)$). We fix an element 
$\phi_{i,j}\in {\rm Hom}_\Gamma(M_j, M_i)$, $1\le i, j\le N$  in each homomorphism space (which is at most one--dimensional);
we assume that $\phi_{i,j}\ne 0$ if ${\rm Hom}_\Gamma(M_j, M_i)$ is non zero. 
It is worth noting that $\phi_{ij}$ can be zero even if $i<j$.  
The elements $\phi_{i,j}$, considered as elements of ${\rm Hom}_\Gamma(M, M)$, span the vector space
${\rm Hom}_\Gamma(M, M)$. In particular, every element $\sum_{i,j} a_{ij}\phi_{ij}$ of $\mathrm{Aut}(M)$ 
comes from an $N\times N$ matrix $\sum_{i,j}a_{ij}E_{ij}$ (where $E_{ij}$ is the elementary matrix with 1 in place 
$(i,j)$ and zero elsewhere); so the whole automorphism group of $M$ can be seen as the image of the
standard parabolic subgroup $P\subset GL_N$ with blocks of sizes $a_1,\dots,a_r$.
We thus obtain the homomorphism $P\to Aut(M)$. In Proposition \ref{Prop:ActionCatenoid} we identify
$P$ with $\mathcal{P}_M$ of Proposition \ref{Prop:Action}. We will need the following lemma.
\begin{lem}\label{LU}
The group $\mathrm{Aut}(M)$ is generated by its Levi part $L(M)=\prod_{l=1}^r {\rm Aut} (M(l)^{a_l})$ and
its unipotent part $U(M)$, generated by the one-parameter subgroups $\exp(z\phi_{i,j}, z\in \bC)$
such that $M_i$ and $M_j$ are not isomorphic. 
\end{lem}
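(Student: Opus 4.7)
The plan is to identify $\mathrm{End}_\Gamma(M)$, via the matrix realization described in the paragraph preceding the lemma, with a subalgebra of block upper triangular matrices in $\mathrm{Mat}_N(\mathbb{C})$, and then to invoke the standard Levi decomposition of a parabolic subgroup of $GL_N$. The first and key step will be the vanishing $\mathrm{Hom}_\Gamma(M_j, M_i) = 0$ whenever $M_i > M_j$ strictly in the partial order \eqref{Eq:DefPartOrder}. Writing $M_j = M[a,b]$ and $M_i = M[c,d]$, strict order gives $c \ge a$ and $d \ge b$ (not both equalities), whereas a nonzero morphism of interval modules in equioriented type $A_n$ exists only when $c \le a \le d \le b$; combined, both pairs of inequalities must be equalities, contradicting non-isomorphism. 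Hence $\phi_{i,j} = 0$ in every position $(i,j)$ falling strictly below the block diagonal determined by the isomorphism classes $M(1) < \cdots < M(r)$ of the chain.

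Next I will group the chain $M_1 \le \cdots \le M_N$ into blocks of sizes $a_1, \ldots, a_r$ corresponding to these classes. The vanishing just established means that the assignment
\[
\Phi\colon\; \sum_{i,j} a_{ij} E_{ij} \;\longmapsto\; \sum_{i,j} a_{ij}\phi_{i,j}
\]
sends all entries strictly below the block diagonal to zero, so restricting $\Phi$ to the standard parabolic $P \subset GL_N$ of block sizes $(a_1, \ldots, a_r)$ still surjects onto $\mathrm{Aut}(M)$ (as asserted in the paragraph preceding the lemma). Applying the classical Levi decomposition $P = L_P \cdot U_P$, with $L_P = \prod_l GL_{a_l}(\mathbb{C})$ and $U_P$ the block upper unitriangular subgroup, I will conclude $\mathrm{Aut}(M) = \Phi(L_P) \cdot \Phi(U_P)$. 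The image $\Phi(L_P)$ is visibly $\prod_l \mathrm{Aut}(M(l)^{a_l}) = L(M)$.

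Finally, I will identify $\Phi(U_P)$ with the subgroup $U(M)$ generated by the $\exp(z\phi_{i,j})$ over pairs with $M_i \not\cong M_j$. Since $U_P$ is generated by its root subgroups $\{1 + z E_{ij}\}_{z \in \mathbb{C}}$ indexed by pairs $(i,j)$ with $i$ in a strictly earlier block than $j$ (equivalently $M_i \not\cong M_j$), applying $\Phi$ sends each generator to $1 + z\phi_{i,j}$, which equals $\exp(z\phi_{i,j})$ because $\phi_{i,j}^2 = 0$; this nilpotency follows immediately from the fact that $\phi_{i,j}$ sends the single summand $M_j$ into the non-isomorphic summand $M_i$ and vanishes on every other summand of $M$, including $M_i$ itself. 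The main obstacle is concentrated in the hom-vanishing step of the first paragraph, which rests on the known parametrization of morphisms between interval modules for type $A_n$; the remainder is a direct transfer of the standard Levi decomposition of a parabolic in $GL_N$.
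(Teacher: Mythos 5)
Your proof is correct and follows the route the paper intends: it works through the matrix realization $P\to \mathrm{Aut}(M)$ set up in the paragraph preceding the lemma and transfers the Levi decomposition of the standard parabolic of block sizes $(a_1,\dots,a_r)$. The paper's own proof is a one-line remark ("an automorphism is determined by its restrictions to the indecomposable summands"); your argument supplies exactly the details it leaves implicit, in particular the vanishing $\mathrm{Hom}_\Gamma(M_j,M_i)=0$ when $M_i>M_j$ strictly, which is indeed the correct consequence of the criterion $c\le a\le d\le b$ for nonzero maps between interval modules, and the identification $\exp(z\phi_{i,j})=1+z\phi_{i,j}$ via $\phi_{i,j}^2=0$.
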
 
\begin{proof}
An automorphism of $M$ is completely determined by restrictions to the indecomposable summands.
This implies the lemma.
\end{proof}

We now provide another characterization of catenoids. Let $M=\bigoplus_{l=1}^r M(l)^{\oplus a_l}$ be a 
$\Gamma$--representation (not necessarily catenoid) with minimal projective resolution \eqref{Eq:ProjReso} 
and indecomposable direct summands $M_1,M_2,\cdots, M_N$. The automorphism group of $R$ can be naturally 
embedded into $GL_N$ as follows: We denote the projective cover of each $M_i$ by $P^i$ so that 
$R=P^1\oplus P^2\oplus\cdots\oplus P^N$. Without loss of generality, we can assume that 
$P^1\leq P^2\leq \cdots \leq P^N$ in the partial order \eqref{Eq:DefPartOrder}. We denote by $\rho_{ij}$ 
a generator of the vector space  $\mathrm{Hom}_\Gamma(P^j,P^i)$ and by $\Omega=\{(i,j)|\,\rho_{ij}\neq0\}$.  
Then an element $f\in \mathrm{Aut}(R)$ has the form $\sum_{(i,j)\in\Omega} b_{ij}\rho_{ij}$ and we send 
this element to the matrix $\sum_{i,j} b_{ij}E_{ij}\in GL_N$. Notice that $\mathrm{Hom}_\Gamma(P^j,P^i)$ 
is one-dimensional for every $1\leq i\leq j\leq N$, and hence the set of all matrices
$\sum_{(i,j)\in\Omega} b_{ij}E_{ij}$  form a parabolic 
subgroup of $GL_N$ that we denote by $\mathcal{R}$ ($\mathcal{R}$ contains the Borel subgroup of upper triangular matrices). 
Under the isomorphism $\mathrm{Aut}(R)\rightarrow \mathcal{R}$, we denote by $P$ the image of  the subgroup 
$\mathcal{P}_M$ defined above.  
\begin{prop}\label{Prop:ActionCatenoid}
Let M be a $\Gamma$--representation. Then M is catenoid if and only if $P$ is a parabolic subgroup of $GL_N$. 
In this case $P$ is the standard parabolic subgroup of $GL_N$ associated with the partition $N=a_1+\cdots+a_r$ 
with Levi decomposition $P=LU$. The restriction of 
$\Psi$ to $L$ is an isomorphism to $L(M)$ and $\Psi(U)=U(M)$.  
\end{prop}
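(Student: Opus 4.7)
My plan is to unfold the definitions so that everything becomes a statement about subgroup stabilisers in $GL(R_n)$, and then invoke Theorem~\ref{Thm:SchubQuivGrass}. Using the basis $\{v_j^{(n)}\}_{j=1}^N$ of $R_n$ consisting of the level-$n$ generators of the summands $P^j$, the identification $\mathrm{Aut}(R)\simeq\mathcal R\subseteq GL_N$ from the statement coincides with the standard embedding $\mathrm{Aut}(R)\hookrightarrow GL(R_n)=GL_N$ obtained by restriction to vertex $n$. Under this identification, $P$ is the subgroup of $GL(R_n)$ that setwise stabilises both flags $R_\bullet$ and $\iota(Q)_\bullet$, and in particular a Borel $B\subseteq GL_N$ acts on $\mathrm{Gr}_{\mathbf f}(\xymatrix@1@C=15pt{Q\ar|-{\iota}[r]&R})$ iff $B\subseteq\mathcal P_M$. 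Theorem~\ref{Thm:SchubQuivGrass} then translates directly into: $M$ is catenoid iff $\mathcal P_M$ contains a Borel of $GL_N$, iff $P$ is parabolic.

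The main step would then be to identify $P$ explicitly with the standard parabolic associated with $(a_1,\dots,a_r)$. With the $M_i$'s ordered so that $(s_j)$ and $(t_j)$ are weakly increasing, one checks $R_i=\mathrm{span}(v_j^{(n)}:s_j\leq i)$ and $\iota(Q)_i=\mathrm{span}(v_j^{(n)}:t_j<i)$, so both flags are coordinate flags in our basis and the breakpoint set of the combined flag is $\{r_i\}\cup\{q_i\}$. I would argue this equals $\{0,a_1,a_1+a_2,\dots,N\}$: each $r_i$ and $q_i$ is a partial sum $a_1+\cdots+a_k$ by monotonicity of $(s_j)$ and $(t_j)$; conversely, for $k<r$ the inequality $M(k)<M(k+1)$ gives either $s(k)<s(k+1)$ (in which case $r_{s(k)}=a_1+\cdots+a_k$) or $t(k)<t(k+1)$ (in which case $q_{t(k)+1}=a_1+\cdots+a_k$), while $r_n=N$. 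Hence $P$ is precisely the standard parabolic with block sizes $a_1,\dots,a_r$.

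For the remaining assertions, the Levi $L\subseteq P$ consists of block-diagonal matrices that correspond under the identification to the automorphisms of $R$ acting as $GL_{a_l}=\mathrm{Aut}((P_{s(l)})^{\oplus a_l})$ on the $l$-th block of summands and as the identity on the others. Composition with $\pi$ factors this action through $\mathrm{Aut}(M(l)^{\oplus a_l})=GL_{a_l}$ (the top is preserved by the projective cover), so $\Psi|_L\colon L\to L(M)$ is a factorwise isomorphism. The image $\Psi(U)$ is a unipotent subgroup of $\mathrm{Aut}(M)$ and hence contained in $U(M)$; surjectivity of $\Psi$ from Proposition~\ref{Prop:Action} together with the decomposition $\mathrm{Aut}(M)=L(M)\cdot U(M)$ of Lemma~\ref{LU} then forces $\Psi(U)=U(M)$. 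The main technical obstacle I anticipate is the combinatorial breakpoint matching of the second paragraph; everything else reduces to structural bookkeeping.
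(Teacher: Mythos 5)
Your argument is correct and shares the paper's overall skeleton (reduce the ``if and only if'' to the common-refinement criterion from the proof of Theorem~\ref{Thm:SchubQuivGrass}, then identify $P$ explicitly, then transport the Levi decomposition through $\Psi$), but the central identification step is carried out by a genuinely different method. The paper proves $\mathcal{P}_M=P'$ by testing root subgroups one at a time: it shows $1+\rho_{ij}\notin\mathcal{P}_M$ for $(i,j)\in\Omega$ strictly below the block diagonal via a $\mathrm{Hom}$--space computation with the syzygies ($Q(l)=M[s+1,n]$, $Q(k)=M[t+1,n]$ and $[Q(l),Q(k)]=0$), and $1+\rho_{ij}\in\mathcal{P}_M$ inside a block because $P^i=P^j$ and $Q^i=Q^j$. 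You instead observe that $\mathcal{P}_M$ is the simultaneous setwise stabiliser in $GL(R_n)$ of the two coordinate flags $R_\bullet$ and $Q_\bullet$, and compute the union of their breakpoint sets to be $\{0,a_1,a_1+a_2,\dots,N\}$; your matching (each $r_i,q_i$ is a partial sum by monotonicity of tops and socles, and each $\beta_k$ is realised because $M(k)<M(k+1)$ forces either $s(k)<s(k+1)$ or $t(k)<t(k+1)$) is correct and arguably cleaner, since it avoids case-checking individual $\rho_{ij}$ and makes the block structure visible at once. What the paper's route buys in exchange is the explicit formula $\Psi(1+\rho_{ij})=1+\phi_{ij}$, which it uses to dispatch the Levi/unipotent statements in one line.

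One step of yours is imprecise: ``$\Psi(U)$ is a unipotent subgroup of $\mathrm{Aut}(M)$ and hence contained in $U(M)$'' is not a valid inference, since $\mathrm{Aut}(M)$ has unipotent subgroups not contained in its unipotent radical (e.g.\ upper unitriangular subgroups of the Levi factors $GL_{a_l}$). The containment $\Psi(U)\subseteq U(M)$ is nevertheless true and is best seen directly: $U=1+\mathfrak{n}$ with $\mathfrak{n}$ strictly upper block-triangular, and $\Psi$ sends $1+\sum b_{ij}\rho_{ij}$ to $1+\sum b_{ij}\phi_{ij}$ with each surviving $\phi_{ij}$ joining non-isomorphic summands, which lands in the radical $1+\mathrm{rad}\,\mathrm{End}(M)=U(M)$. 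With that repair, your concluding argument ($\mathrm{Aut}(M)=L(M)\Psi(U)$ together with $L(M)\cap U(M)=\{1\}$ forces $\Psi(U)=U(M)$) is sound.
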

\begin{proof}
We know from the proof of Theorem~\ref{Thm:SchubQuivGrass} that M is catenoid if and only if the two flags 
$R_\bullet$ and $Q_\bullet$ are both stable under the action of a same Borel subgroup $B_N\subseteq GL_N$. 
In particular, $B_N\subseteq \mathcal{R}$ stabilizes $Q_\bullet$ and hence $B_N\subseteq P$. 

In the following, we use the notation $\beta_k:=a_1+\cdots+a_k$. Let $P'$ be the standard parabolic subgroup of 
$GL_N$ associated with the partition $N=a_1+\cdots+a_r$. We define the map $\xi: P'\rightarrow\mathrm{Aut}(R)$ 
which sends an element
$g=\sum_{i,j=1}^N g_{i,j}E_{i,j}\in P'$ to the element $\xi(g):=\sum_{i,j=1}^N g_{i,j}\rho_{i,j}\in \mathrm{Aut}(R)$. 
First of all, $\xi$ is well--defined: indeed every pair $(i,j)$ of the form $\beta_k\leq j<i<\beta_{k+1}$ 
(for some $k=1,2,\cdots, r-1$) belongs to $\Omega$ since $P^i=P^j$ is the projective cover of $M(k)$ and hence 
$\rho_{ij}\neq0$. Clearly, $\xi$ is an injective homomorphism of groups. We claim that its image is 
$\mathcal{P}_M$. Indeed, we  notice that the Borel $B_N\subset \mathcal{R}\simeq \mathrm{Aut}(R)$ is contained in 
such image. It remains to show the following:
$$
1+\rho_{ij}\notin \mathcal{P}_M\textrm{ whenever }(i,j)\in\Omega \textrm{ and }j\leq \beta_k< i\textrm{ for some }k,
$$ 
and
$$
1+\rho_{ij}\in \mathcal{P}_M\textrm{ whenever } (i,j)\in\Omega \textrm{ and }\beta_k\leq j< i<\beta_{k+1}\textrm{ for some }k.
$$
To show the first statement, we 
take a pair $(i,j)\in \Omega$ such that $\beta_{l-1}< j\leq \beta_{l}\leq \beta_{k-1}< i\leq \beta_{k}$ 
for some $l$ and $k$. Then $P^j$ is the projective cover of $M(l)$ and $P^i$ is the projective cover of 
$M(k)$ and $P^i=P^j$. So we have $M(l)<M(k)$ and they have the same top. This means that $M(l)=M[r,s]$ 
and $M(k)=M[r,t]$ for some $r\leq s<t$. But then $Q(l)=M[s+1,n]$ and $Q(k)=M[t+1,n]$ and hence the morphism 
$\rho_{ij}$ does not stabilize $\iota(Q)$ since $[Q(k), Q(l)]=0$. 

To show the second statement we 
take a pair $(i,j)\in \Omega$ such that $\beta_k\leq j< i<\beta_{k+1}$. In this case $P^i=P^j$ and $Q^i=Q^j$ 
and hence $\rho_{ij}$ stabilizes $\iota(Q)$. 

To conclude the proof we notice that the map $\Psi$ of Proposition~\ref{Prop:Action} sends 
$1+\rho_{ij}\in \mathcal{P}_M$ to $1+\phi_{ij}\in\mathrm{Aut}(M)$. 
\end{proof}

\begin{rem}
From now on we will freely use Propositions~\ref{Prop:Action} and \ref{Prop:ActionCatenoid}  and we will not distinguish between the action of $B_N$ on $Gr_\mathbf{f}(\xymatrix@1@C=15pt{Q\ar|-{\iota}[r]&R})$ and the action of $\mathrm{Aut}(M)$ on a SQG $Gr_\mathbf{e}(M)$. We will freely say that $B_N$ acts on $Gr_\mathbf{e}(M)$ and that  $\mathrm{Aut}(M)$ acts on  $Gr_\mathbf{f}(\xymatrix@1@C=15pt{Q\ar|-{\iota}[r]&R})$.
\end{rem}

\section{Irreducible components of SQGs}\label{ic}
By definition, a SQG is, in paticular, a closed $B$--stable subvariety of a partial flag variety, and hence its irreducible components are Schubert varieties. In this section we describe these Schubert varieties. 

%

Let $V$ be an $N$-dimensional vector space with a basis $v_1,\dots,v_N$. We consider the algebraic group $SL_N$ acting on $V$, 
the Borel subgroup $B_N$ consisting of upper-triangular matrices, and the maximal torus $T_N$ of diagonal matrices. To the vector space $V$ we attach 
the representation $\bV$ of $A_n$ such that $\bV_i=V$, and such that all the maps are identity maps (in other words, $\bV\simeq P_1^{\oplus N}$). 

Recall the projective resolution  \eqref{Eq:ProjReso} of $M$.
We note that the projective representation $R$ is a sub--representation of $\bV$.
Let $\dimv R=(r_1,\dots,r_n)$ and $\dimv Q=(q_1,\dots,q_n)$. Let $W=\mathcal{S}_N$ be the Weyl group of $SL_N$. 
Given a dimension vector $\be$, we set $\mathbf{f}=\mathbf{e}+\dimv Q$, so that $f_a=e_a+q_a$ for all $a=1,2,\cdots, n$. 
\begin{definition}
An element $w\in \mathcal{S}_N$ is called 
\emph{$(M,\be)$--compatible} if 
\[
\{1,\dots,q_a\}\subset w(\{1,\dots,f_a\})\subset \{1,\dots,r_a\} \text{ for all } 1\le a\le n.   
\]
We denote by $W(M,\be)\subset \mathcal{S}_N$ the set of all
$(M,\be)$--compatible elements.
\end{definition}
Recall that, to an element $w\in \mathcal{S}_N$, one attaches the $T_N$-fixed point 
$p(w)\in \textrm{Fl}_{\be+\dimv Q}(V)$ which is the flag  whose $i$--th vector space is $\textrm{span}\,\{v_{w(1)},\cdots, v_{w(i)}\}$.
Two elements $w_1$ and $w_2$ induce the same $T_N$-fixed point (that is, $p(w_1)=p(w_2)$) if and only if
\[
w_1(\{1,\dots,f_a\})=w_2(\{1,\dots,f_a\}),\textrm{ for all }a=1,\dots,n.
\] 
In this case we say that $w_1$ is equivalent to $w_2$.
Let us pick one element of minimal length in each equivalence class and denote this set
by $W^0(M,\be)$.

We define a partial order on $W^0(M,\be)$. For $1\le a\le n$, let 
\[
w(\{1,\dots,f_a\})=\{k^a_1(w),\dots,k^a_{f_a}(w)\} \text{ and } k^a_1(w)<\dots <k^a_{f_a}(w).   
\]
Then we say that $w_1\le w_2$ if $k^a_i(w_1)\le k^a_i(w_2)$ for all $1\le a\le n$, $1\le i\le f_a$.
Note that this order depends on the equivalence classes of $w_1$ and $w_2$, but not on 
the elements themselves. 
\begin{lem}\label{BN}
We have $w_1\le w_2$ if and only if $p(w_1)\in \overline{B_N\,p(w_2)}$. 
\end{lem}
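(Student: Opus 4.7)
The plan is to recognize this lemma as a direct instance of the classical Ehresmann--Bruhat order criterion for Schubert varieties in a partial flag manifold, applied to the parabolic determined by the dimension vector $\mathbf{f}=\mathbf{e}+\dimv Q$. No quiver-theoretic input is needed here: once we forget the representation $M$ and only remember the flag sizes $\mathbf{f}$, the lemma is a statement purely about $SL_N$-geometry.

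First I would identify $\textrm{Fl}_{\mathbf{f}}(V)$ with the homogeneous space $SL_N/P_{\mathbf{f}}$, where $P_{\mathbf{f}}$ is the standard parabolic subgroup stabilizing the reference flag $p(\mathrm{id})$. Under this identification, the $T_N$-fixed point $p(w)$ corresponds to the coset $wP_{\mathbf{f}}$, and two permutations $w_1,w_2$ satisfy $p(w_1)=p(w_2)$ iff $w_1W_{P_{\mathbf{f}}}=w_2W_{P_{\mathbf{f}}}$, where $W_{P_{\mathbf{f}}}\cong \mathcal{S}_{f_1}\times \mathcal{S}_{f_2-f_1}\times\cdots\times\mathcal{S}_{N-f_n}$. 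This matches the equivalence relation defined just before the lemma. In particular, selecting a minimal-length representative in each equivalence class realizes $W^0(M,\be)$ as a subset of the minimal coset representatives of $\mathcal{S}_N/W_{P_{\mathbf{f}}}$, and the partial order $w_1\le w_2$ defined in the paper depends only on these cosets, as the paper observes.

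Next I would invoke the standard Bruhat decomposition $SL_N/P_{\mathbf{f}}=\bigsqcup_{wW_{P_{\mathbf{f}}}}B_N w P_{\mathbf{f}}/P_{\mathbf{f}}$, which yields a cellular decomposition into $B_N$-orbits through the $T_N$-fixed points, with closure relations governed by the Bruhat order on $\mathcal{S}_N/W_{P_{\mathbf{f}}}$. Thus $p(w_1)\in\overline{B_N\,p(w_2)}$ iff $w_1\le w_2$ in this Bruhat order. It remains to check that this Bruhat order coincides with the termwise comparison of the sorted tuples $k^a_1(w)<\cdots<k^a_{f_a}(w)$ for $a=1,\dots,n$. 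This is precisely the Ehresmann tableau criterion: one proves it by first establishing the corresponding criterion on the complete flag variety $SL_N/B_N$ (where one compares $w(\{1,\dots,k\})$ for \emph{all} $k$, a result standard from the affine-chart description of Schubert cells via rank conditions on minors), and then projecting along $SL_N/B_N\to SL_N/P_{\mathbf{f}}$, whose fibers are Schubert varieties of the Levi; the closure of a Schubert variety maps to the closure of the image, so the criterion for $SL_N/P_{\mathbf{f}}$ is obtained by restricting attention to the indices $k\in\{f_1,\dots,f_n\}$.

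The only substantive obstacle is this last verification, the Ehresmann criterion itself, but it is entirely classical and amounts to bookkeeping with rank conditions (see e.g.\ Fulton's \emph{Young Tableaux}, \S10.5, or Bj\"orner--Brenti's \emph{Combinatorics of Coxeter Groups}, \S2.5--2.6). Since the paper's definition of the order on $W^0(M,\be)$ is worded in exactly the form of the Ehresmann criterion, the lemma follows immediately once the above identifications are in place.
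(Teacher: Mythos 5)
Your proposal is correct and takes essentially the same route as the paper, whose proof is a one-line appeal to the fact that the order on $W^0(M,\be)$ is the Bruhat order on cosets and that closure relations of $B_N$-orbits are governed by it, citing the same reference (Fulton, \S 10.5); you have merely spelled out the standard identifications (fixed points as cosets $wP_{\mathbf{f}}$, Bruhat decomposition, Ehresmann tableau criterion) that the paper leaves implicit.
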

\begin{proof}
The partial order put on $W^0(M,\be)$ is nothing but the Bruhat order and the the claim is hence a well--known fact in the theory of Schubert varieties (see e.g. \cite[Section~10.5]{Fult}). 
\end{proof}

Let $S(M,\be)\subset W^0(M,\be)$ be the set of maximal elements in this poset. 
Lemma \ref{BN} implies that the SQG $\Gr_\be(M)$ is irreducible 
if and only if the set $S(M,\be)$ consists of a single element.   
This case can be described explicitly in the following way. First of all, we perform some reductions: 
If $e_a> \dim M_a$ for some vertex $1\leq a\leq n$, then our quiver Grassmannian $\Gr_\be(M)$ is empty. If $e_a = \dim M_a$ 
or $r_a\le q_{a+1}$ (this is equivalent to the fact that the map
$M_a\to M_{a+1}$ is zero), the SQG factors into the product of two quiver Grassmannians (again SQGs) for smaller 
equioriented type $A$ quivers. Similarly, this is the case when $e_a=0$ for some vertex $a$. 
\begin{definition}\label{Def:Simple}
A SQG $Gr_\be(M)$ is called \emph{simple} if $0<e_a<\dim M_a$ and $r_a>q_{a+1}$ for all vertices $a$.
\end{definition}
It is clear from the remark above that every irreducible SQG is a product of simple irreducible SQGs (associated with quivers with less number of vertices). 
\begin{thm}\label{Prop:Irr}
A simple SQG $\Gr_\be(M)$ is irreducible if and only if $r_a-e_a\ge r_{a+1}-e_{a+1}$ for all $a$. 
\end{thm}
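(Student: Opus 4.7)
The plan is to use Lemma \ref{BN} to reduce the problem to combinatorics: the simple SQG $\Gr_\be(M)$ is irreducible if and only if the poset $W^0(M,\be)$ has a unique maximal element under the Bruhat order. I would first reformulate the data more transparently by writing $S_a = \{1,\dots,q_a\} \cup T_a$ with $T_a \subset \{q_a+1,\dots,r_a\}$ of cardinality $e_a$. The chain condition $S_a \subset S_{a+1}$ then translates to the compatibility $T_a \cap \{q_{a+1}+1,\dots,r_a\} \subset T_{a+1}$, and the Bruhat order becomes the coordinatewise dominance order on the sorted tuples of the $T_a$.

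For the \emph{if} direction, introduce the naive maximum $T_a^\star := \{r_a - e_a + 1, \dots, r_a\}$, which is the unique dominance-maximal $e_a$-subset of $\{q_a+1,\dots,r_a\}$. Simplicity yields $e_{a+1} < r_{a+1}-q_{a+1}$, hence $r_{a+1}-e_{a+1} > q_{a+1}$, so the hypothesis $r_a - e_a \ge r_{a+1} - e_{a+1}$ forces $r_a - e_a > q_{a+1}$; consequently $T_a^\star \cap \{q_{a+1}+1,\dots,r_a\} = T_a^\star$, and the inclusion $T_a^\star \subset T_{a+1}^\star$ follows directly from $r_a - e_a \ge r_{a+1} - e_{a+1}$ together with $r_a \le r_{a+1}$. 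Thus $(T_a^\star)_a$ is a valid chain, visibly the unique maximum of the poset, and $\Gr_\be(M)$ is irreducible.

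For the \emph{only if} direction, assume $r_a - e_a < r_{a+1} - e_{a+1}$ for some $a$. I would produce two incomparable maximal chains. The key observation is that $F := T_a^\star \cap \{q_{a+1}+1,\dots,r_a\}$ is nonempty (it contains $r_a > q_{a+1}$) and its smallest element is at most $r_{a+1}-e_{a+1}$, so $F \not\subset T_{a+1}^\star$. The natural candidates are: chain $A$ with $T_a^A = T_a^\star$ and $T_{a+1}^A$ chosen as the dominance-maximal $e_{a+1}$-subset of $\{q_{a+1}+1,\dots,r_{a+1}\}$ containing $F$; chain $B$ with $T_{a+1}^B = T_{a+1}^\star$ and $T_a^B$ chosen as the dominance-maximal $e_a$-subset of $\{q_a+1,\dots,r_a\}$ whose part above $q_{a+1}$ lies in $T_{a+1}^\star$. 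Since $T_a^A$ strictly dominates $T_a^B$ while $T_{a+1}^B$ strictly dominates $T_{a+1}^A$, the two chains are incomparable, and after completing them at the remaining indices by the same greedy propagation one obtains two distinct maximal elements.

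The principal obstacle will be this last construction: one must verify that both candidate chains actually extend to valid chains across the remaining indices $b \ne a, a+1$, that after propagation they remain globally maximal, and that nothing in the poset dominates either of them. Ensuring that $A$ and $B$ are realizable requires checking size constraints at indices $a$ and $a+1$ (so that non-emptiness of the SQG indeed permits both choices), and the propagation arguments must use the simplicity hypothesis together with the local inequality $r_a - e_a < r_{a+1} - e_{a+1}$ to keep the two chains inequivalent. Should either candidate fail as literally stated at the boundary, the construction would have to be adjusted by replacing $T_a^\star$ or $T_{a+1}^\star$ with the next-best achievable dominance-maximal subset, exploiting again the total ordering of the dominance poset on subsets of a fixed size.
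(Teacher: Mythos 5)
Your ``if'' direction is correct and is exactly the paper's argument: under $r_a-e_a\ge r_{a+1}-e_{a+1}$ (plus simplicity, which gives $r_{a+1}-e_{a+1}>q_{a+1}$) the pointwise-maximal sets $T_a^\star=\{r_a-e_a+1,\dots,r_a\}$ assemble into an admissible chain, which then dominates everything. The genuine gap is in the ``only if'' direction, and it sits precisely at the spot you flag as the principal obstacle. Your chain $A$ requires extending $T_a^\star$ past vertex $a$, which needs $|T_a^\star\cap\{q_{a+1}+1,\dots,r_a\}|=\min(e_a,\,r_a-q_{a+1})\le e_{a+1}$; this can fail (namely when $e_a>e_{a+1}$ and $r_a-q_{a+1}>e_{a+1}$), and when it fails $T_a^\star$ does not occur as the $a$-th coordinate of \emph{any} admissible chain. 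Your proposed repair --- replace $T_a^\star$ by the best achievable subset --- cannot work in general, because the implication you are trying to prove is itself false for the definition of ``simple'' given here. Take $\Gamma=\Gamma_2$, $M=M[1,1]^{\oplus 2}\oplus M[1,2]^{\oplus 2}$, $\be=(3,1)$. Then $\dimv M=(4,2)$, $R=P_1^{\oplus 4}$, $Q=P_2^{\oplus 2}$, so $r=(4,4)$, $q=(0,2)$, $\mathbf{f}=(3,3)$; all simplicity conditions hold, and $r_1-e_1=1<3=r_2-e_2$. Yet the admissible $T_N$-fixed points are the $3$-subsets of $\{1,2,3,4\}$ containing $\{1,2\}$, namely $\{1,2,3\}$ and $\{1,2,4\}$, of which $\{1,2,4\}$ dominates the other: there is a unique maximal element, and indeed $\Gr_\be(M)\cong\bP^1$ is irreducible (any subrepresentation of dimension vector $(3,1)$ must contain the kernel of $M_1\to M_2$). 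Here $|T_1^\star\cap\{3,4\}|=2>1=e_2$, so your chain $A$ does not exist and no incomparable pair can be produced.

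For comparison, the paper's own proof is even terser: it asserts that $S(M,\be)$ is a singleton \emph{if and only if} some $w$ realizes $w(\{1,\dots,f_a\})=\{1,\dots,q_a\}\cup T_a^\star$ for all $a$, i.e.\ it silently assumes that a unique maximal chain must be the pointwise maximum. That is exactly the unproved step your construction was meant to supply, and the example above shows it is not merely unproved but wrong without a further hypothesis (e.g.\ something forcing each $T_a^\star$ to be individually realizable, such as $e_a\le e_{a+1}$, which the paper derives only as a corollary of irreducibility). So: keep your ``if'' argument; for ``only if'' you should not try to complete the construction as stated, but rather identify the missing hypothesis under which chains $A$ and $B$ are both realizable, or reformulate the criterion in terms of the achievable coordinatewise maxima rather than the naive ones.
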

\begin{proof}
$\Gr_\be(M)$ is irreducible if and only if $S(M,\be)$ consists of a single element. We note that this happens
if and only if there exists an element $w\in W(M,\be)$ such that
\[
w(\{1,\dots,f_a\})=\{1,\dots,q_a\}\cup \{r_a,r_a-1,\dots,r_a-e_a+1\}.   
\]
This is equivalent to $r_a-e_a\ge r_{a+1}-e_{a+1}$ for all $a$. Indeed,
since $r_{a+1}>f_{a+1}$, the inclusion $w(\{1,\dots,f_a\})\supset \{r_a,r_a-1,\dots,r_a-e_a+1\}$
implies $r_a-e_a+1\le r_{a+1}-e_{a+1}+1$.  
\end{proof}
\begin{cor}
If a simple SQG $\Gr_\be(M)$ is irreducible, the dimension of $\Gr_\be(M)$ is equal to the Euler form 
$\langle \be,\dimv M-\be\rangle_\Gamma$, and there exists a short exact sequence 
$$\xymatrix{0\ar[r]&P\ar[r]&M\ar[r]&I\ar[r]&0}$$ where $P$ is projective, $I$ is injective, and  $\be=\dimv P$.
\end{cor}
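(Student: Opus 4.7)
The plan is to extract, from the proof of Theorem~\ref{Prop:Irr}, the explicit torus-fixed subrepresentation $N\subset M$ attached to the unique maximal element $w\in S(M,\be)$, to show that $N$ is projective with $\dimv N=\be$ and that $M/N$ is injective, and finally to deduce the dimension formula from the vanishing of $\mathrm{Ext}^1(N,M/N)$.

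Working in the basis $\{v_k^{(a)}\}$ of $R$ constructed in the proof of Theorem~\ref{Thm:SchubQuivGrass}, the identity
\[
w(\{1,\ldots,f_a\})=\{1,\ldots,q_a\}\cup\{r_a-e_a+1,\ldots,r_a\}
\]
from the proof of Theorem~\ref{Prop:Irr} pins down $N=\varphi^{-1}(p(w))$: the component $N_a$ is spanned modulo $Q_a$ by the classes of $v_{r_a-e_a+1}^{(a)},\ldots,v_{r_a}^{(a)}$. The simplicity hypothesis $e_a<d_a=r_a-q_a$ already guarantees these classes are linearly independent in $M_a$, so $\dimv N=\be$.

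To show $N$ is projective I would check, vertex by vertex, that each arrow $N_a\to N_{a+1}$ is injective. Since the arrows of $R$ act as $v_k^{(a)}\mapsto v_k^{(a+1)}$, the two obstructions are that an index $k\in[r_a-e_a+1,r_a]$ could satisfy $k\le q_{a+1}$ (making the image vanish in $M_{a+1}$) or could fall outside $[r_{a+1}-e_{a+1}+1,r_{a+1}]$ (missing $N_{a+1}$). Both are ruled out by the single chain
\[
r_a-e_a\;\ge\;r_{a+1}-e_{a+1}\;>\;q_{a+1},
\]
whose first step is the irreducibility hypothesis of Theorem~\ref{Prop:Irr} and whose second step is the simplicity condition $e_{a+1}<d_{a+1}=r_{a+1}-q_{a+1}$. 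Since an equioriented $A_n$-representation is projective iff all its arrows act injectively, this exhibits $N=:P$ as projective. A symmetric argument, using that $(M/P)_a$ has basis indexed by $k\in[q_a+1,r_a-e_a]$ and that the same inequalities give every basis vector of $(M/P)_{a+1}$ a preimage of the same index in $(M/P)_a$, shows that every arrow of $M/P$ is surjective; hence $I:=M/P$ is injective, yielding the short exact sequence $0\to P\to M\to I\to 0$ with $\be=\dimv P$.

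For the dimension, $\mathrm{Ext}^1_\Gamma(P,M/P)=0$ because $P$ is projective, so the standard smoothness criterion for quiver Grassmannians gives that $P$ is a smooth point with
\[
\dim_P \Gr_\be(M)\;=\;\dim\Hom_\Gamma(P,M/P)\;=\;\langle\be,\dimv M-\be\rangle_\Gamma,
\]
where the second equality is the hereditary Euler-form identity $\dim\Hom-\dim\mathrm{Ext}^1=\langle-,-\rangle_\Gamma$. Irreducibility of $\Gr_\be(M)$ then yields the dimension formula. The only real obstacle is the interval bookkeeping in the projectivity/injectivity arguments; once the chain $r_a-e_a\ge r_{a+1}-e_{a+1}>q_{a+1}$ is isolated, both halves of the exact sequence follow immediately.
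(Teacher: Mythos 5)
Your proposal is correct. For the short exact sequence, your route is essentially the paper's: the paper also identifies the projective subrepresentation $P$ with the preimage of the unique $T_N$-fixed point $p(w)$ and extracts everything from the chain of inequalities $r_a-e_a\ge r_{a+1}-e_{a+1}>q_{a+1}$ (irreducibility plus simplicity); you simply carry out the basis bookkeeping in the $v_k^{(a)}$-coordinates explicitly, which the paper leaves terse (it asserts ``this implies that $P$ embeds into $M$'' and notes $r_a-e_a-q_a\ge r_{a+1}-e_{a+1}-q_{a+1}$ for injectivity of the quotient without spelling out the arrow-by-arrow check). Your verification that the classes of $v_{r_a-e_a+1}^{(a)},\dots,v_{r_a}^{(a)}$ stay above $q_{a+1}$ and land in the correct index window of $N_{a+1}$ is exactly the content of those inequalities, and the characterizations ``projective iff all arrows injective'' and ``injective iff all arrows surjective'' are valid for equioriented $A_n$. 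Where you genuinely diverge is the dimension formula: the paper computes the dimension of the Schubert variety directly as $\sum_{a}(e_a-e_{a-1})(r_a-f_a)$ and then recognizes this sum as the Euler form, whereas you invoke the standard smoothness criterion for quiver Grassmannians ($\mathrm{Ext}^1_\Gamma(P,M/P)=0$ forces $P$ to be a smooth point of tangent dimension $\dim\mathrm{Hom}_\Gamma(P,M/P)=\langle\be,\dimv M-\be\rangle_\Gamma$, and irreducibility transfers the local dimension to the whole variety). Both are sound; the paper's count is self-contained on the Schubert side, while yours explains representation-theoretically \emph{why} the Euler form appears and shows in passing that the open cell consists of smooth points, which connects to the claim in the introduction that generic points are subrepresentations isomorphic to $P$.
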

\begin{proof}
Assume that $r_a-e_a\ge r_{a+1}-e_{a+1}$ for all $a=1,2,\cdots, n-1$, so that $S(M,\be)$ consists of a single element $w$. We show that the point $p(w)$ of $\mathcal{F}l_\mathbf{f}(R_n)$ corresponds to a projective
sub--representation $P$ of $M$ (under the map $\Psi$ of Proposition~\ref{Prop:IsoQuiveFlagVar}), such that the quotient  $M/P$ is injective. By hypothesis, $e_a\leq e_{a+1}$ for all $a$, and hence there exists a projective representation $P$ of dimension vector $\mathbf{e}$. We have $r_a-e_a\geq r_{a+1}-e_{a+1}>f_{a+1}-e_{a+1}=q_{a+1}$, and hence $e_a+q_{a+1}<r_a$; this implies that $P$ embeds into $M$. To show that the quotient is injective, we note that $r_a-e_a-q_a\geq r_{a+1}-e_{a+1}-q_{a+1}$. 

Finally, the dimension of $\Gr_\be(M)$ is equal to the dimension of the corresponding Schubert variety.
This dimension is equal to 
\[\sum_{a=1}^n (e_a-e_{a-1})(r_a-f_a)=\sum_{a=1}^n (e_a-e_{a-1})(d_a-e_a)=\langle \be,\dimv M-\be\rangle_\Gamma,\]
with the convention that $e_0:=0$.
\end{proof}
\begin{cor}
If a simple SQG $\Gr_\be(M)$ is irreducible, then $e_a\le e_{a+1}$ and $d_a-e_a\ge d_{a+1}-e_{a+1}$.
\end{cor}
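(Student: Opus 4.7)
The plan is to deduce this directly from the preceding corollary rather than re-entering the combinatorics of $W^0(M,\be)$. That corollary hands us a short exact sequence $\xymatrix@1@C=8pt{0\ar[r]&P\ar[r]&M\ar[r]&I\ar[r]&0}$ with $P$ projective, $I$ injective, and $\be=\dimv P$; in particular $\dimv I=\bd-\be$. So both inequalities reduce to monotonicity properties of dimension vectors of projective and injective $\Gamma$--representations, where $\Gamma$ is the equioriented $A_n$ quiver.

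First I would check the inequality $e_a\le e_{a+1}$ using projectivity of $P$. Recall from Section~\ref{setup} that the indecomposable projectives are exactly $P_i=M[i,n]$, whose dimension vector is $(0,\dots,0,1,\dots,1)$ with ones in positions $i,\dots,n$; this is non-decreasing. Writing $P=\bigoplus_i P_i^{\oplus c_i}$ and taking direct sums preserves non-decreasingness of the dimension vector, so $\be=\dimv P$ satisfies $e_a\le e_{a+1}$ for all $a$.

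The second inequality $d_a-e_a\ge d_{a+1}-e_{a+1}$ is the dual statement applied to $I$. The indecomposable injectives are $I_i=M[1,i]$ with dimension vector $(1,\dots,1,0,\dots,0)$ having ones in positions $1,\dots,i$, which is non-increasing; hence any injective has a non-increasing dimension vector. Since $\dimv I=\bd-\be$, this gives $d_a-e_a\ge d_{a+1}-e_{a+1}$ as required.

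There is no real obstacle: once the preceding corollary is in hand, this is an immediate structural consequence of the shape of indecomposable projectives and injectives for equioriented type $A$. The only thing to note carefully is that we do \emph{not} need the simplicity hypothesis to perform this deduction itself -- it enters only through the hypotheses of the previous corollary, which produced $P$ and $I$. Thus the proof will be just a few lines invoking the previous result and the explicit description of projectives and injectives.
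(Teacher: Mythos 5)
Your proof is correct and follows the route the paper intends: the corollary is stated without its own proof precisely because it is an immediate consequence of the preceding corollary, with $e_a\le e_{a+1}$ coming from $\be=\dimv P$ for $P$ projective and $d_a-e_a\ge d_{a+1}-e_{a+1}$ from $\bd-\be=\dimv I$ for $I$ injective. Your observations that indecomposable projectives $M[i,n]$ have non-decreasing dimension vectors and indecomposable injectives $M[1,i]$ have non-increasing ones are exactly the facts needed, and your remark about where the simplicity hypothesis enters is accurate.
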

The following result provides a representation--theoretic interpretation of Theorem~\ref{Prop:Irr} and shows that the necessary condition for irreducibility of a quiver Grassmannian given in \cite[Theorem~5.1]{KR} is also sufficient for simple SQGs. 
\begin{cor}
A simple SQG $Gr_\be(M)$ is irreducible if and only if  $[M,U]\leq \langle\be,\mathbf{dim}\,U\rangle$ for every non--injective indecomposable representation U.
\end{cor}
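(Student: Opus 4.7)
My plan is to derive the statement from Theorem~\ref{Prop:Irr} by rewriting both sides as explicit numerical inequalities and reducing the case of a general non-injective indecomposable $U$ to the special case $U = S_i$ with $i \ge 2$.

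First, I would reformulate the criterion of Theorem~\ref{Prop:Irr}. Since $r_a = \sum_{k \le a}[M, S_k]$, the difference $r_{a+1} - r_a$ equals $[M, S_{a+1}]$, so the inequality $r_a - e_a \ge r_{a+1} - e_{a+1}$ is equivalent to
\[
[M, S_i] \le e_i - e_{i-1} \qquad \text{for all } i = 2, \ldots, n, \qquad (\ast)
\]
which is precisely the statement of the corollary restricted to $U = S_i$. Next, for $U = M[p,q]$ indecomposable, a direct computation of the Euler form on the quiver $1 \to 2 \to \cdots \to n$ gives $\langle \be, \dimv M[p,q]\rangle = e_q - e_{p-1}$ when $p \ge 2$ and $e_q$ when $p = 1$. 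The injective indecomposables are exactly the $I_i = M[1,i]$, so the non-injective ones are precisely those with $p \ge 2$, and for these the Euler form is the telescoping sum $\sum_{i=p}^q (e_i - e_{i-1})$. The $(\Leftarrow)$ direction is then immediate: specialising the hypothesis to $U = S_i$ for $i \ge 2$ recovers $(\ast)$.

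For the $(\Rightarrow)$ direction, I would expand $M = \bigoplus_{a \le b} M[a,b]^{\oplus m_{ab}}$ and use the standard fact that, for the equioriented $A_n$-quiver, $[M[a,b], M[p,q]] = 1$ precisely when $p \le a \le q \le b$ (and $0$ otherwise). This yields
\[
[M, M[p,q]] = \sum_{\substack{(a,b):\, p \le a \le q \\ b \ge q}} m_{ab} \qquad \text{and} \qquad \sum_{i=p}^q [M, S_i] = \sum_{\substack{(a,b):\, p \le a \le q \\ b \ge a}} m_{ab}.
\]
Since $p \le a \le q$ together with $b \ge q$ imply $b \ge a$, the first sum is bounded by the second, i.e.\ $[M, M[p,q]] \le \sum_{i=p}^q [M, S_i]$. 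Summing $(\ast)$ over $i \in [p, q]$ then gives $[M, U] \le \sum_{i=p}^q (e_i - e_{i-1}) = e_q - e_{p-1} = \langle \be, \dimv U \rangle$, as required.

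The only genuine content is the observation that the criterion for simples, being a telescoping bound, already dominates the corresponding bound for every non-injective indecomposable; the catenoid and simplicity hypotheses enter only through the appeal to Theorem~\ref{Prop:Irr}.
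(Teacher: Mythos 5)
Your proof is correct and follows essentially the same route as the paper: both directions reduce the general non-injective indecomposable to the simples $S_i$ ($i\ge 2$), where the condition $[M,S_i]\le e_i-e_{i-1}$ is just a restatement of $r_{i-1}-e_{i-1}\ge r_i-e_i$ from Theorem~\ref{Prop:Irr}, and the forward direction is a telescoping bound. The only cosmetic difference is that the paper obtains the key inequality $[M,M[p,q]]\le\sum_{i=p}^q[M,S_i]$ via the surjection $\pi\colon R\to M$ and the computation $[R,M[p,q]]=r_q-r_{p-1}$, whereas you verify the same inequality by counting multiplicities directly.
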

\begin{proof}
In view of Theorem~\ref{Prop:Irr}, $Gr_\be(M)$ is irreducible if and only if  
\begin{equation}\label{Cond:Irr}
r_a-e_a\geq r_{a+1}-e_{a+1} \textrm{ for all }a=1,2,\cdots, n-1,
\end{equation} 
where $(r_1,\cdots, r_n)$ is the dimension vector of the projective cover $R$ of $M$. Since $\pi:R\rightarrow M$ is surjective, we have $[M,U]\leq [R,U]$ for all $U$ and it is hence enough to prove that the  conditions \eqref{Cond:Irr} are satisfied if and only if the conditions 
\begin{equation}\label{Cond:Irr2}
[R,U]\leq \langle\be,\mathbf{dim}\, U\rangle \textrm{ for all indecomposable non--injective $U$}
\end{equation} 
are satisfied.  Let $U=M[i,j]$ for some $2\leq i\leq j\leq n$. We have
$[R,U]=r_j-r_{i-1}$
and $\langle\mathbf{e},\mathbf{dim}\,U\rangle=e_j-e_{i-1}$. 
If \eqref{Cond:Irr} holds, then $r_{i-1}-e_{i-1}\geq r_{i}-e_i\geq\cdots\geq r_j-e_j$, and hence 
\eqref{Cond:Irr2} holds. Conversely, if \eqref{Cond:Irr2} holds, then, by choosing $U=S_{a+1}$ ($a=1,2,\cdots, n-1$), 
we get $r_{a+1}-r_a\leq e_{a+1}-e_a$, and hence \eqref{Cond:Irr} holds. 
\end{proof}
Assume that $\Gr_\be(M)$ is irreducible and hence isomorphic to a Schubert variety.
We determine explicitly the (unique) Weyl group element in $S(M,\be)$. 
Denote by $s_i=(i,i+1)$ for $i=1,\dots,N-1$ the simple reflections in the symmetric group $\mathcal{S}_N$. For $1\le a\le n-1$  
let $m_a:=(d_a-e_a)-(d_{a+1}-e_{a+1})$. We define a permutation $\pi_a$ as
\[
(s_{q_a+m_a}\dots s_{q_a+1})\dots (s_{q_a+e_a+m_a-2}\dots s_{q_a+e_a-1}) (s_{q_a+e_a+m_a-1}\dots s_{q_a+e_a}).
\]
For example, for $a=n$ we have
\[
\pi_n=(s_{r_n-e_n}\dots s_{q_n+1})\dots (s_{r_n-2}\dots s_{q_n+e_n-1}) (s_{r_n-1}\dots s_{q_n+e_n})
\]
We note that the number of factors of $\pi_a$ is equal to $e_am_a$ (and equal to $e_n(d_n-e_{n})$
for $\pi_n$).

\begin{prop}\label{Prop:IrrCompIrr}
Whenever a simple SQG $Gr_\mathbf{e}(M)$ is irreducible, the unique element of $S(\be, M)$ is equal to $w= \pi_n\pi_{n-1}\cdots \pi_2\pi_1$.
\end{prop}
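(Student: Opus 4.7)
The plan is to verify that $\sigma := \pi_n\cdots\pi_1$ satisfies $\sigma(\{1,\ldots,f_a\})= E_a := \{1,\ldots,q_a\}\cup\{r_a-e_a+1,\ldots,r_a\}$ for every $a$ and has length equal to $\dim \Gr_{\mathbf e}(M)$; since by Theorem~\ref{Prop:Irr} (and the fact that $W^0(M,\mathbf e)$ consists of minimum length coset representatives) the unique element of $S(M,\mathbf e)$ is the minimum length permutation satisfying these set conditions, this will force $\sigma = w$. A preliminary reduced-word calculation makes $\pi_a$ explicit: it is the identity outside the interval $I_a:=\{q_a+1,\ldots,q_a+e_a+m_a\}$, and within $I_a$ it sends $q_a+j\mapsto q_a+m_a+j$ for $1\le j\le e_a$ and $q_a+e_a+k\mapsto q_a+k$ for $1\le k\le m_a$.

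The heart of the proof is an induction on $b$ for the following claim: setting $\sigma_b:=\pi_b\cdots\pi_1$ and (using the convention $d_{n+1}=e_{n+1}=0$)
\[
r_a^{(b)} := r_a-(d_{b+1}-e_{b+1}),\qquad U_a^{(b)} := \{1,\ldots,q_a\}\cup\{r_a^{(b)}-e_a+1,\ldots,r_a^{(b)}\},
\]
one has $\sigma_b(\{1,\ldots,f_a\})=U_a^{(b)}$ for every pair $a-1\le b\le n$. Note that $r_a^{(a-1)}=q_a+e_a=f_a$ makes $U_a^{(a-1)}=\{1,\ldots,f_a\}$, and $r_a^{(n)}=r_a$ makes $U_a^{(n)}=E_a$. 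For the base case $b=a-1$ I would argue that $\sigma_{a-1}$ acts within $\bigcup_{c\le a-1}I_c\subseteq\{1,\ldots,f_{a-1}+m_{a-1}\}$, and that the identity $f_a-f_{a-1}-m_{a-1}=r_a-r_{a-1}\ge 0$ forces $\sigma_{a-1}$ to preserve $\{1,\ldots,f_a\}$ setwise. For the inductive step $b\mapsto b+1$, the subset $\{1,\ldots,q_a\}$ is disjoint from $I_{b+1}$ since $q_{b+1}\ge q_a$, and hence fixed by $\pi_{b+1}$. The crucial check is that the ``top piece'' $\{r_a^{(b)}-e_a+1,\ldots,r_a^{(b)}\}$ lies in the ``lower half'' $\{q_{b+1}+1,\ldots,q_{b+1}+e_{b+1}\}$ of $I_{b+1}$, on which $\pi_{b+1}$ acts as a shift by $+m_{b+1}$, thereby producing $U_a^{(b+1)}$. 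Using $r_a^{(b)}=r_a-(d_{b+1}-e_{b+1})$, the upper containment $r_a^{(b)}\le q_{b+1}+e_{b+1}$ simplifies to $r_a\le r_{b+1}$, while the lower containment $r_a^{(b)}-e_a+1\ge q_{b+1}+1$ simplifies to the irreducibility inequality $r_a-e_a\ge r_{b+1}-e_{b+1}$ of Theorem~\ref{Prop:Irr}. This inclusion check is where the irreducibility hypothesis enters decisively, and it is the step I expect to require the most care.

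Chaining the induction up to $b=n$ gives $\sigma(\{1,\ldots,f_a\})=U_a^{(n)}=E_a$ for every $a$, so $\sigma$ belongs to the equivalence class of $w$ and in particular $\ell(\sigma)\ge\ell(w)$. Conversely, $\ell(\sigma)$ is bounded above by the number of simple reflections in the word $\pi_n\cdots\pi_1$, which is $\sum_{a=1}^n e_a m_a$; by the telescoping calculation already carried out in the preceding corollary this equals $\sum_{a=1}^n (e_a-e_{a-1})(d_a-e_a)=\dim\Gr_{\mathbf{e}}(M)=\ell(w)$. Combining the two bounds yields $\ell(\sigma)=\ell(w)$, and therefore $\sigma$ is the minimum length element of its equivalence class, namely $w$.
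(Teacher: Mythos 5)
Your proof is correct and follows the same two-step strategy as the paper: verify that $\pi_n\cdots\pi_1$ sends $\{1,\dots,f_a\}$ to $\{1,\dots,q_a\}\cup\{r_a-e_a+1,\dots,r_a\}$ by a direct computation (which the paper leaves as a sketch and you carry out via the induction on $b$, with the irreducibility inequality $r_a-e_a\ge r_{b+1}-e_{b+1}$ entering exactly where you flag it), and then deduce minimality of length from $\sum_a e_am_a=\langle\be,\dimv M-\be\rangle_\Gamma$. Your write-up is simply a fully detailed version of the paper's argument.
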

\begin{proof}
We need to verify that 
\[
w(\{1,\dots,q_a+e_a\})=\{1,\dots,q_a\}\cup\{r_a-e_a+1,\dots,r_a\},
\] 
and that $w$ is the element of minimal length with this property. The first statement is proved by a direct 
computation (starting from $a=n$ and descending to $a=1$), and the second statement follows from
the equality
\[
\sum_{a=1}^n e_am_a=\langle \be,\bd-\be\rangle_\Gamma.
\]
\end{proof}

\begin{example}[Degenerate flag varieties]\label{Ex:DegFlag} 
Let us consider the catenoid $M$ which is the direct sum of all the indecomposable projective and the indecomposable injectives $\Gamma_n$--representations: 
$$M:=P_1\oplus P_2\oplus\cdots\oplus P_n\oplus I_1\oplus I_2\oplus\cdots \oplus I_n.$$ 
Given the dimension vector $\mathbf{e}=(1,2,\cdots, n)$, the quiver Grassmannian $Gr_\mathbf{e}(M)$ is isomorphic to the $\mathfrak{sl}_{n+1}$--degenerate flag variety \cite{F2, CFR} and it is isomorphic to a Schubert variety $X_{\sigma_n}$ \cite{CL} for a permutation $\sigma_n\in \mathcal{S}_{2n}$. Let us show that $\sigma_n$ coincides with the word $w=w_n$ of Proposition~\ref{Prop:IrrCompIrr} and hence the main result of \cite{CL} becomes a particular case of Theorem~\ref{Prop:Irr} and Proposition~\ref{Prop:IrrCompIrr}. First of all, M is clearly catenoid, $R=P_1^{n+1}\oplus P_2\oplus\cdots\oplus P_n$ and $Q=P_2\oplus\cdots\oplus P_n$ so that $r_a=n+a$, $q_a=a-1$ and $m_a=1$ for all $a=1,2,\cdots, n$. In particular, $r_a-e_a=n$ for all a, and hence $Gr_\mathbf{e}(M)$ is an irreducible SQS (in view of Theorem~\ref{Prop:Irr}). We have $\pi_a=s_as_{a+1}\cdots s_{2a-1}\in\mathcal{S}_{2n}$ which is the permutation given by
$$
\pi_a(k)=\left\{
\begin{array}{ll}
k&\textrm{ if }k\notin\{a,\cdots, 2a\}\\
a&\textrm{ if }k=2a\\
k+1&\textrm{ if }k\in\{a,\cdots, 2a-1.\}
\end{array}
\right.
$$
By definition, 
$$
w=w_n=(s_ns_{n+1}\cdots s_{2n-1})(s_{n-1}s_n\cdots s_{2n-1})\cdots (s_3s_4s_5)(s_2s_3)s_1.
$$ 
We claim that 
\begin{equation}\label{Eq:WnDegFlagVar}
w_n(r)=\left\{
\begin{array}{ll}
k&\textrm{ if }r=2k\\
n+1+k&\textrm{ if }r=2k+1.
\end{array}
\right.
\end{equation}
To prove \eqref{Eq:WnDegFlagVar} one can proceed by induction on $n\geq1$, by noting that $w_n=\pi_nw_{n-1}$ (after identifying $\mathcal{S}_{2n-2}$ with the subgroup of $\mathcal{S}_{2n}$ generated by $s_1,s_2,\cdots, s_{2n-3}$). Formula \eqref{Eq:WnDegFlagVar} shows that $w_n$ is the permutation $\sigma_n$
found in \cite{CL}.  
\end{example}
Now we consider the general case of reducible SQGs.

\begin{thm}\label{Thm:CellDec}
The $T_N$--fixed points of a SQG $\Gr_\be(M)$ are labelled by the elements of $W^0(M,\be)$; the quiver
Grassmannian $\Gr_\be(M)$ admits a cellular decomposition, the cells being the $B_N$--orbits of the $T_N$--fixed point.
The irreducible components of $\Gr_\be(M)$ are parametrized by the elements of $S(M,\be)$.
\end{thm}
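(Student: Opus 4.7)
The plan is to analyze the $B_N$-action on $\Gr_\be(M)$ via its embedding as a closed $B_N$-stable subvariety of $\mathcal{F}l_\mathbf{f}(R_n)$ supplied by Theorem~\ref{Thm:SchubQuivGrass}, and then to transfer the Bruhat stratification of the ambient partial flag variety onto the SQG.

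First I would recall that the $T_N$-fixed points of $\mathcal{F}l_\mathbf{f}(R_n)$ are precisely the coordinate flags $p(w)$, one for each class in $\mathcal{S}_N/\mathcal{S}_\mathbf{f}$. Working in the basis $\{v_1,\dots,v_N\}$ of $R_n$ constructed in the proof of Theorem~\ref{Thm:SchubQuivGrass}, for which $R_a=\spa\{v_1,\dots,v_{r_a}\}$ and $\iota_a(Q_a)=\spa\{v_1,\dots,v_{q_a}\}$, the membership $p(w)\in \Gr_\be(M)$ translates into the numerical condition $\{1,\dots,q_a\}\subseteq w(\{1,\dots,f_a\})\subseteq \{1,\dots,r_a\}$, i.e.\ $w\in W(M,\be)$. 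Passing to the shortest representative in each equivalence class then yields the asserted bijection between the $T_N$-fixed points of $\Gr_\be(M)$ and $W^0(M,\be)$.

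Next, for the cellular decomposition, I would invoke the Bruhat decomposition of $\mathcal{F}l_\mathbf{f}(R_n)$ into affine Schubert cells $B_N\,p(w)$, indexed by minimal-length representatives of $\mathcal{S}_N/\mathcal{S}_\mathbf{f}$. Since $\Gr_\be(M)$ is closed and $B_N$-stable, every $B_N$-orbit it meets is entirely contained in it, so $\Gr_\be(M)$ decomposes as the disjoint union of those Schubert cells whose $T_N$-fixed base point lies in $\Gr_\be(M)$. By the previous step these cells are indexed precisely by $W^0(M,\be)$, producing the desired cellular decomposition.

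Finally, to identify the irreducible components I would write $\Gr_\be(M)=\bigcup_{w\in W^0(M,\be)}\overline{B_N\,p(w)}$ as a finite union of Schubert varieties. By Lemma~\ref{BN}, $\overline{B_N\,p(w_1)}\subseteq \overline{B_N\,p(w_2)}$ if and only if $w_1\le w_2$ in the partial order on $W^0(M,\be)$; hence the redundant Schubert varieties are exactly those with non-maximal index, and the genuine irreducible components are in bijection with the maximal elements, which by definition form $S(M,\be)$. The only delicate point I foresee is the observation that a closed $B_N$-stable subvariety of a flag manifold is automatically a union of whole Schubert cells, but this is standard: it follows from the finiteness of the Bruhat decomposition together with the fact that each Schubert cell is the single $B_N$-orbit of any of its points.
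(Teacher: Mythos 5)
Your proposal is correct and follows essentially the same route as the paper: the paper's proof is a terse version of exactly this argument (a closed $B_N$-stable subvariety of the flag manifold is a union of Schubert cells through its $T_N$-fixed points, which are identified with $W^0(M,\be)$, and Lemma~\ref{BN} then picks out the maximal elements $S(M,\be)$ as the irreducible components). You have merely spelled out the standard details that the paper leaves implicit.
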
 
\begin{proof}
Since $\Gr_\be(M)$ is $B_N$--stable, it is equal to a union of several Schubert varieties.
Therefore, a cellular decomposition is provided by the $B_N$--orbits of the $T_N$-fixed points.
Now it suffices to use Lemma \ref{BN}.
\end{proof}

\begin{example}\label{Ex:Complexes}
Let  $M=M[1,1]\oplus\bigoplus_{i=1}^{n-1} M[i,i+1]\oplus M[n,n]$. 
In particular, $\dimv M=(2,\dots,2)$.
Let $\be=(1,\dots,1)$. Then
$N=n+1$, $q_a=a-1$, $r_a=a+1$, and the number of irreducible components of $\Gr_\be(M)$
(that is, the number of elements in $S(\be, M)$) is the $n$-th Fibonacci number. Indeed, assume that
$w\in S(\be,M)$. Then, for all $a$, there are two possibilities: either 
$w(\{1,\dots,a\})=\{1,\dots,a\}$ with no additional restrictions on $w(\{1,\dots,a+1\})$, or
\[
w(\{1,\dots,a\})=\{1,\dots,a-1\}\cup\{a+1\},\; w(1,\dots,a+1)=\{1,\dots,a+1\}. 
\]
To any such $w$ we attach a length $n$ sequence of units and zeroes, where $0$ appears if
$w(\{1,\dots,a\})=\{1,\dots,a\}$, and $0$ appears otherwise. Now the $w\in S(\be,M)$ are labelled
by the length $n$ sequences such that the pair $(1,1)$ is forbidden. The number of such sequences is
exactly the Fibonacci number. We note that the irreducible component attached to such a sequence
is isomorphic to the product of several copies of $\bP^1$, and the number of copies is 
the number of $1$'s in the sequence.  
\end{example}

\section{Poincar\'e polynomials of SQGs}\label{Euler}
Let $\bp$ be a Dyck path on $[1,n]$, and let $M=\bigoplus_{l=1}^r M(p_l)^{m(p_l)}$ be a representation supported on $\bp$. The aim of this 
section is to compute the Poincar\'e polynomial in singular homology of the quiver Grassmannian $\Gr_\be(M)$.

\begin{lem}
Assume that all the indecomposable direct summands of $M$ are isomorphic, that is, there exists a unique $p=(a,b)\in\bp$ such that
$m(p)>0$. Then $\Gr_\be(M)$ is empty unless $e_i=0$ for $i<a$, $i>b$ and $e_a\le\dots\le e_b$. 
If $\Gr_\be(M)$ is non-empty, then the Poincar\'e polynomial of 
$\Gr_\be(M)$ is given by the $q$-multinomial coefficient 
\[
P_{\Gr_\be(M)}(q)=\mn{m(p)}{e_a}{e_b}
\]
\end{lem}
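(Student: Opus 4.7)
The plan is to identify the quiver Grassmannian $\Gr_\be(M)$ explicitly with a standard partial flag variety and then invoke the well-known formula for the Poincar\'e polynomial of such a flag variety.

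First I would observe that, since $M$ is isomorphic to $M[a,b]^{\oplus m(p)}$, the underlying graded vector space of $M$ has $M_i = 0$ for $i \notin [a,b]$ and $M_i \cong \mathbf{C}^{m(p)}$ for $a \le i \le b$, with every arrow $M_i \to M_{i+1}$ (for $a \le i < b$) acting as an isomorphism. The vanishing of $M_i$ outside $[a,b]$ immediately forces $e_i = 0$ there, otherwise $\Gr_\be(M)$ is empty. Inside $[a,b]$, a subrepresentation $N \subseteq M$ is specified by subspaces $N_i \subseteq M_i$ that are preserved by the arrows. Using the arrows to identify all $M_i$ with $i \in [a,b]$ with a fixed copy of $\mathbf{C}^{m(p)}$, this preservation condition becomes exactly the nested inclusion $N_a \subseteq N_{a+1} \subseteq \cdots \subseteq N_b$ inside $\mathbf{C}^{m(p)}$. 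In particular, $\Gr_\be(M)$ is empty unless $e_a \le e_{a+1} \le \cdots \le e_b$.

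Assuming the inequalities hold, the map $N \mapsto (N_a \subseteq N_{a+1} \subseteq \cdots \subseteq N_b)$ is an isomorphism of schemes between $\Gr_\be(M)$ and the partial flag variety $\mathcal{F}l_{(e_a, \dots, e_b)}(\mathbf{C}^{m(p)})$. One can verify this is an isomorphism (not merely a bijection) by checking that the morphism is a closed embedding that is also surjective with trivial fibres, for example by comparing to the natural embedding into a product of Grassmannians.

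Finally, the Poincar\'e polynomial of the partial flag variety $\mathcal{F}l_{(e_a, \dots, e_b)}(\mathbf{C}^{m(p)})$ is the classical $q$-multinomial coefficient
\[
\mn{m(p)}{e_a}{e_b},
\]
coming from its Schubert cell decomposition (a standard computation, cf. the Bruhat decomposition of $GL_{m(p)}/P$ for the appropriate parabolic $P$). Combining these steps yields the claimed formula. The only subtle point, and the main thing to pin down carefully, is the identification in the second step; everything else is formal or invokes a standard computation.
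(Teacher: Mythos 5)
Your proposal is correct and follows essentially the same route as the paper: identify $\Gr_\be(M)$ with the partial flag variety $\mathcal{F}l_{(e_a,\dots,e_b)}(\mathbf{C}^{m(p)})$ by transporting the subspaces along the (isomorphism) arrows, then read off the Poincar\'e polynomial from the standard Schubert cell decomposition. The paper's proof is in fact terser than yours, simply asserting the identification and describing the cells as Borel orbits through the torus-fixed points labelled by nested subsets.
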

\begin{proof}
If $e_i=0$ for $i<a$, $i>b$ and $e_a\le\dots\le e_b$, then the quiver Grassmannian is isomorphic to the partial flag variety 
$\Fl_{e_a,\dots,e_b}(\bC^{m(p)})$. The cells for the standard cellular decomposition are labeled by collections of subsets 
$K_\bullet=(K_a\subset\dots\subset K_b)$ of a set of cardinality $m(p)$ such that $\#K_i=e_i$;
each cell is the orbit of the Borel subgroup through the $T$-fixed point defined by the sets $K_\bullet$.
\end{proof}
 
Now let us consider the general case. We want to stratify $\Gr_\be(M)$ in such a way that each stratum is
a fibration over a product of partial flag varieties, with fibers being 
affine spaces. Namely,we decompose $\be$ as a sum $\be=\be(1)+\dots + \be(r)$ in such a way that 
\begin{enumerate}
\item \label{s} $\be(l)$ is supported on $p_l$, that is, if $p_l=(i_l,j_l)$, then $\be(l)_a=0$ unless $i_l\le a\le j_l$,
\item \label{l} $\be(l)_a\le\dots\le \be(l)_b\le m(p_l)$.
\end{enumerate}
Then we obtain an embedding
\begin{equation}\label{Pi}
\prod_{l=1}^r \Gr_{\be(l)} (M(p_l)^{\oplus m(p_l)})\subset \Gr_\be(M).
\end{equation}
We denote the image of this embedding by $\Pi(\be(1),\dots,\be(r))$.

Now we consider the subgroups $U(M), L(M)\subset Aut(M)$ (see Lemma \ref{LU}).
We recall that the groups $U(M)$ and $L(M)$ generate $\mathrm{Aut}(M)$, and that
$\Pi(\be(1),\dots,\be(r))$ is $L(M)$-invariant. 

\begin{thm}\label{Thm:Poincare}
We have
\[
\Gr_\be(M)=\bigsqcup U(M) \Pi(\be(1),\dots,\be(r)),
\]
where the disjoint union is taken over all possible sequences $\be(1),\dots,\be(r)$
satisfying \eqref{s} and \eqref{l}. Moreover, the map
\[
U(M) \Pi(\be(1),\dots,\be(r))\to \Pi(\be(1),\dots,\be(r)),\  gx\mapsto x
\]
is an affine space fibration.
\end{thm}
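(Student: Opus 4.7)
The plan is to exploit the fact that $U(M)$ is precisely the unipotent radical of the parabolic $\mathrm{Aut}(M)$ relative to the filtration $M^{\le l}:=\bigoplus_{k\le l}M(p_k)^{m(p_k)}$. Each generator $\phi_{i,j}$ of $U(M)$ takes a copy of $M(p_b)$ to a copy of $M(p_a)$ with $a<b$, so $\phi_{i,j}(M^{\le l})\subseteq M^{\le l}$ for every $l$, while the induced map on the associated graded $\mathrm{gr}_\bullet M=\bigoplus_l M(p_l)^{m(p_l)}$ is zero. Consequently, $U(M)$ preserves the filtration $M^{\le\bullet}$ and acts trivially on $\mathrm{gr}_\bullet M$.

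For the set-theoretic decomposition I would proceed in two steps. For disjointness, I attach to each $N\in\Gr_\be(M)$ the invariants $\be(l)(N):=\dimv(N\cap M^{\le l})-\dimv(N\cap M^{\le l-1})$, which are $U(M)$-stable by the previous observation; a split subrepresentation $N=\bigoplus_l N_l\in\Pi(\be(1),\dots,\be(r))$ evidently has $\be(l)(N)=\be(l)$, so distinct decompositions yield disjoint pieces $U(M)\Pi(\be(1),\dots,\be(r))$. For the covering, I invoke Theorem \ref{Thm:CellDec}: $\Gr_\be(M)=\bigsqcup_w B_N\,p(w)$ and, since the torus $T_N$ scales each indecomposable summand of $M$ individually, each fixed point $p(w)$ is a split subrep and hence belongs to some $\Pi(\be(1),\dots,\be(r))$. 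Using the Levi decomposition $B_N=B_L\cdot U(M)$ with $B_L:=B_N\cap L(M)=\prod_l B_{m(p_l)}$, and noting that $B_L\subseteq L(M)$ preserves the direct sum decomposition of $M$, one has $B_N\,p(w)=U(M)\cdot(B_L\,p(w))\subseteq U(M)\Pi$, which covers $\Gr_\be(M)$.

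To show the retraction $gx\mapsto x$ is well-defined, suppose $x=\bigoplus_l N_l\in\Pi$ and $u\in U(M)$ with $ux\in\Pi$, say $ux=\bigoplus_l N_l'$. Since $u$ preserves each $M^{\le l}$ the identity $u(x\cap M^{\le l})=ux\cap M^{\le l}$ holds; passing to associated gradeds and using that $u$ acts as the identity on $\mathrm{gr}_\bullet M$ yields $N_l=N_l'$ for every $l$, so $ux=x$. For the fibration structure, $\Pi=\prod_l\mathrm{Fl}(\bC^{m(p_l)})$ is $L(M)$-homogeneous (each factor being a partial flag variety for $GL_{m(p_l)}$). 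Fixing a base point $x_0\in\Pi$ identifies the map $U(M)\Pi\to\Pi$ with the $\mathrm{Aut}(M)$-equivariant map of orbits $\mathrm{Aut}(M)\cdot x_0\to L(M)\cdot x_0$ induced by $\mathrm{Aut}(M)\twoheadrightarrow L(M)$, so the fiber over any point is the unipotent homogeneous space $U(M)/U(M)_{x_0}$.

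The main obstacle is verifying that this fiber is an \emph{affine space}, and not merely an affine variety. The cleanest approach is to choose a linear order on the generators $\phi_{i,j}$ and decompose $U(M)\cong\prod_{(i,j)}\exp(\bC\phi_{i,j})$ as a variety; one then checks that the stabilizer $U(M)_{x_0}$ is cut out precisely by vanishing of the coordinates corresponding to those $\phi_{i,j}$ that already send $x_0$ into itself, so the complementary coordinates yield an affine section. Alternatively, one could argue inductively along the lower central series of $U(M)$, realizing the family as an iterated tower of $\mathbb{G}_a$-principal bundles, each trivial since the base is affine at each stage. Either way, combined with the $L(M)$-equivariance, the map $U(M)\Pi\to\Pi$ is a Zariski-locally trivial affine space fibration, as required.
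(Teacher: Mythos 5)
Your argument is correct, and for the set-theoretic decomposition it is essentially the paper's: both proofs reduce the covering claim to the $T_N$-fixed points (which are split subrepresentations and hence lie in some $\Pi(\be(1),\dots,\be(r))$) and then use the factorization of $B_N$ through $U(M)$ and $B_N\cap L(M)$ together with the $L(M)$-invariance of $\Pi$. You are in fact more careful than the paper on two points it leaves implicit: the disjointness of the strata (via the $U(M)$-invariant $\dimv(N\cap M^{\le l})$) and the well-definedness of the retraction $gx\mapsto x$ (via the associated graded). Where you genuinely diverge is the fibration claim. The paper fixes a torus-fixed point $\gamma=\prod_l\gamma(l)$, takes affine charts $C(l)$ around the $\gamma(l)$, and exhibits an explicit trivialization $U(M)C(l)\cong C(l)\times\mathrm{Hom}_\Gamma(\gamma(l),\bigoplus_{s<l}M(p_s)^{\oplus m(p_s)})$; this concrete identification of the fiber as a Hom-space is exactly what feeds into the dimension count of the following lemma. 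You instead identify each stratum with a single $\mathrm{Aut}(M)$-orbit and the fiber with the unipotent homogeneous space $U(M)/U(M)_{x_0}$, then invoke the general fact that a quotient of a unipotent group by a closed subgroup is an affine space. Your route is cleaner group theory and makes the local triviality more transparent, but it loses the explicit fiber; to recover the Poincar\'e polynomial you would still have to compute $\dim U(M)/U(M)_{x_0}$ and match it with the paper's Hom-space.

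Two small corrections to your last paragraph. First, the coordinates are swapped: the stabilizer $U(M)_{x_0}$ \emph{contains} the subgroups $\exp(z\phi_{i,j})$ with $\phi_{i,j}(x_0)\subseteq x_0$, so it is cut out by the vanishing of the coordinates indexed by the $\phi_{i,j}$ with $\phi_{i,j}(x_0)\not\subseteq x_0$, and it is these latter coordinates that furnish the section. Second, for an arbitrary $x_0\in\Pi$ the stabilizer need not be a coordinate subgroup at all (a combination such as $\phi_{i,j}-\phi_{i',j}$ can preserve $x_0$ while neither summand does); you should first use $L(M)$-transitivity to move $x_0$ to a $T_N$-fixed point, where the coordinate description holds. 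Neither issue is fatal, since your alternative argument via the lower central series works for any base point.
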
  
\begin{proof}
We need to prove that the union $\bigsqcup U(M) \Pi(\be(1),\dots,\be(r))$ covers the whole quiver Grassmannian, and that the map above is indeed a fibration. 

To prove the first claim, consider the embedding 
$\varphi:Gr_\mathbf{e}(M)\rightarrow \mathcal{F}l_{\mathbf{e}+\mathbf{dim}\,Q}(R_n)$ of Proposition~\ref{Prop:IsoQuiveFlagVar}, its image being the  $B_N$--stable subvariety $\Gr_{\be+\dimv Q}(\xymatrix@1@C=8pt{Q\ar^\iota[r]&R})$.
Any point of this image lies in the $B_N$-orbit of some $T_N$-fixed point $\gamma$ of $ \mathcal{F}l_{\mathbf{e}+\mathbf{dim}\,Q}(R_n)$. We note that $\gamma$
belongs to some $\Pi(\be(1),\dots,\be(r))$. Since $\Pi(\be(1),\dots,\be(r))$ is $L(M)$-invariant, and $B_N$ is generated by
$L(M)$ and $U(M)$, the first claim follows. 

Now we consider the map 
$$
U(M)\, \Pi(\be(1),\dots,\be(r))\to \Pi(\be(1),\dots,\be(r)),\qquad gx\mapsto x.
$$
We fix a $T_N$--fixed point $\gamma\in \Pi(\be(1),\dots,\be(r))$ and write
$\gamma=\prod_{l=1}^r\gamma(l)$ according to the factors of \eqref{Pi}.   
There exists an open cell $C(l)$ in each $\Gr_{\be(l)} (M(p_l)^{\oplus m(p_l)})$, containing $\gamma(l)$.
Now it is easy to see that the orbit
$U(M)C(l)$ is canonically isomorphic to the product 
\[
C(l)\times {\rm Hom}_\Gamma(\gamma(l),\bigoplus_{s<l}M(p_s)^{\oplus m(p_s)})),
\]  
where $\gamma(l)$ is considered as a representation of $\Gamma$.
\end{proof}

For $1\le l\le r$ we define $\beta(l)_i=\sum_{s<l} m(p_s)\dim M(p_s)_i$.
\begin{lem}
The dimension $D(\be(1),\dots,\be(r))$ of the fiber of the map $U(M) \Pi(\be(1),\dots,\be(r))\to \Pi(\be(1),\dots,\be(r))$ is equal to
\[
\sum_{l=1}^r \sum_{i=1}^n (e(l)_i-e(l)_{i-1})\beta(l)_i. 
\]
\end{lem}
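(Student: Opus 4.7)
The plan is to extract the fiber dimension directly from the structural description obtained in the proof of Theorem~\ref{Thm:Poincare}, where the orbit $U(M)C(l)$ is identified with $C(l) \times \mathrm{Hom}_\Gamma(\gamma(l), \bigoplus_{s<l} M(p_s)^{\oplus m(p_s)})$ for each $T_N$--fixed point $\gamma = \prod_{l=1}^r \gamma(l) \in \Pi(\be(1),\dots,\be(r))$. Multiplying over $l$ yields
$$D(\be(1),\dots,\be(r)) = \sum_{l=1}^r \dim \mathrm{Hom}_\Gamma\bigl(\gamma(l), \bigoplus_{s<l} M(p_s)^{\oplus m(p_s)}\bigr),$$
so the problem reduces to computing each of these homomorphism spaces.

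I would invoke two standard facts about the equioriented quiver $\Gamma = A_n$. First, its Euler form admits the expression $\langle \be, \bd \rangle_\Gamma = \sum_{i=1}^n (e_i - e_{i-1}) d_i$ (with the convention $e_0 := 0$), which coincides with the right-hand side of the formula we are trying to prove once $\bd = \beta(l)$ and $\be = \be(l)$. Second, a short computation with the projective resolution $0 \to P_{b+1} \to P_a \to M[a,b] \to 0$ shows that $\mathrm{Ext}^1_\Gamma(M[a,b], M[c,d])$ vanishes whenever $b \ge d$. Now each $\gamma(l)$ is a subrepresentation of $M(p_l)^{\oplus m(p_l)} = M[i_l, j_l]^{\oplus m(p_l)}$ on which the arrow maps act injectively, so its indecomposable direct summands are all of the form $M[a, j_l]$; meanwhile, the compatibility of the Dyck-path enumeration with the partial order~\eqref{Eq:DefPartOrder1} forces $j_s \le j_l$ for all $s < l$.

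Combining these observations yields $\mathrm{Ext}^1_\Gamma(\gamma(l), \bigoplus_{s<l} M(p_s)^{\oplus m(p_s)}) = 0$, so the Euler form computes Hom:
$$\dim \mathrm{Hom}_\Gamma\bigl(\gamma(l), \bigoplus_{s<l} M(p_s)^{\oplus m(p_s)}\bigr) = \langle \be(l), \beta(l) \rangle_\Gamma = \sum_{i=1}^n (e(l)_i - e(l)_{i-1}) \beta(l)_i,$$
and summing over $l$ produces the announced formula. The only point that really requires attention is the Ext-vanishing, which in turn reduces to the Dyck-path inequality $j_s \le j_l$ for $s < l$; everything else is routine bookkeeping. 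A fully parallel proof is available avoiding Ext altogether: one decomposes $\gamma(l)$ explicitly as $\bigoplus_a M[a,j_l]^{\oplus(e(l)_a - e(l)_{a-1})}$, uses the formula $\dim\mathrm{Hom}_\Gamma(M[a,b],M[c,d]) = 1$ iff $c \le a \le d \le b$, and exchanges the order of summation.
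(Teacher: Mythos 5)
Your proposal is correct, and it reaches the formula by a different computation than the paper's. Both arguments start from the same structural input, namely the identification (in the proof of Theorem~\ref{Thm:Poincare}) of the fiber with $\prod_{l}\mathrm{Hom}_\Gamma\bigl(\gamma(l),\bigoplus_{s<l}M(p_s)^{\oplus m(p_s)}\bigr)$; the difference lies in how the dimension of that $\mathrm{Hom}$--space is evaluated. The paper does a bare-hands orbit count: a vector $u$ in $(M(p_l)^{\oplus m(p_l)})_i$ has $U(M)$--orbit of dimension $\beta(l)_i$, and a point of the cell is cut out by choosing $e(l)_i-e(l)_{i-1}$ new vectors at each vertex $i$, giving the stated sum directly. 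You instead invoke the Euler form and verify $\mathrm{Ext}^1_\Gamma\bigl(\gamma(l),\bigoplus_{s<l}M(p_s)^{\oplus m(p_s)}\bigr)=0$, which reduces to the Dyck-path inequality $j_s\le j_l$ for $s<l$ together with the observation that the summands of $\gamma(l)$ are of the form $M[a,j_l]$ (correct: the structure maps of $M(p_l)^{\oplus m(p_l)}$ are injective on its support, so no summand of a subrepresentation can have socle strictly before $j_l$). Your route makes explicit two things the paper leaves implicit: the homological reason the count works (Ext-vanishing, so $\dim\mathrm{Hom}=\langle\be(l),\beta(l)\rangle_\Gamma$) and the precise role of the Dyck-path ordering; the paper's count is shorter but silently uses the same inequality when asserting that the full $\beta(l)_i$--dimensional space of displacements is realized. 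Your alternative Ext-free variant (decomposing $\gamma(l)=\bigoplus_a M[a,j_l]^{\oplus(e(l)_a-e(l)_{a-1})}$ and summing $\dim\mathrm{Hom}_\Gamma(M[a,j_l],-)$) is essentially the paper's count rewritten summand by summand, so you have in effect both proofs.
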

\begin{proof}
We fix a vector $u$ in the vector space $(M(p_l)^{\oplus m(p_l)})_i$. We note that $\beta(l)_i$ is the dimension of the orbit $U(M) u$.
This implies the claim of the lemma.  
\end{proof}

\begin{cor}
We have the following formula for the Poincar\'e polynomial of the SQG $Gr_\mathbf{e}(M)$:
\[
P_{\Gr_\be(M)}(q)=\sum_{\be(1),\dots,\be(r)} D(\be(1),\dots,\be(r))\prod_{l=1}^r \mn{m(p_l)}{\be(1)}{\be(l)}
\]
\end{cor}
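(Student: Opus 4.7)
The plan is to derive the formula by simply assembling the stratification from Theorem~\ref{Thm:Poincare} with the Poincaré polynomial computation for single-indecomposable SQGs and the fiber-dimension lemma. By Theorem~\ref{Thm:Poincare}, the quiver Grassmannian decomposes as a disjoint union
\[
\Gr_\be(M)=\bigsqcup_{\be(1),\dots,\be(r)} U(M)\,\Pi(\be(1),\dots,\be(r)),
\]
and each piece is an affine-space fibration of relative dimension $D(\be(1),\dots,\be(r))$ over its base $\Pi(\be(1),\dots,\be(r))=\prod_{l=1}^r\Gr_{\be(l)}(M(p_l)^{\oplus m(p_l)})$. First I would check (or cite) that for such a Zariski-locally trivial affine fibration $E\to B$ over a projective base with only even cohomology one has $P_E(q)=q^{\dim \text{fiber}}\,P_B(q)$; this is standard but worth flagging.

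Next I would assemble the Poincaré polynomial. For each stratum, the base is a product, so its Poincaré polynomial factors as $\prod_{l=1}^r P_{\Gr_{\be(l)}(M(p_l)^{\oplus m(p_l)})}(q)$. The preceding lemma identifies each factor with a partial flag variety $\Fl_{\be(l)_{i_l},\dots,\be(l)_{j_l}}(\bC^{m(p_l)})$, whose Poincaré polynomial is the $q$-multinomial coefficient $\mn{m(p_l)}{\be(l)_{i_l}}{\be(l)_{j_l}}$. Writing this as $\mn{m(p_l)}{\be(1)}{\be(l)}$ in the notation of the statement, and inserting the affine-bundle factor $q^{D(\be(1),\dots,\be(r))}$, I get exactly one summand of the claimed formula. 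Summing over the strata indexed by decompositions $\be=\be(1)+\cdots+\be(r)$ satisfying the support and monotonicity conditions \eqref{s} and \eqref{l} gives the total Poincaré polynomial.

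The main subtlety, and the only place where I would take real care, is the claim that the decomposition $\Gr_\be(M)=\bigsqcup U(M)\Pi(\be(1),\dots,\be(r))$ combined with the affine fibration structure actually behaves additively on Poincaré polynomials. This requires that each piece is locally closed and that the stratification is compatible with the Schubert cell decomposition provided by Theorem~\ref{Thm:CellDec}: since each $\Pi(\be(1),\dots,\be(r))$ is $L(M)$-invariant and itself decomposes into Schubert cells (the cells of the product of flag varieties), each orbit $U(M)\Pi(\be(1),\dots,\be(r))$ is a union of $B_N$-cells of $\Gr_\be(M)$. Hence the overall cellular decomposition has cells only in even real dimensions, and the Poincaré polynomial is genuinely additive over the strata. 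Once this compatibility is noted, the formula is immediate.
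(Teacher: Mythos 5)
Your proposal is correct and is exactly the argument the paper intends: the corollary is stated without proof as the immediate combination of Theorem~\ref{Thm:Poincare}, the lemma identifying each factor of $\Pi(\be(1),\dots,\be(r))$ with a partial flag variety whose Poincar\'e polynomial is a $q$-multinomial coefficient, and the lemma computing the fiber dimension $D(\be(1),\dots,\be(r))$; your extra care about additivity over the strata (each stratum being a union of $B_N$-cells, so the cell-counting generating function is additive) is the right way to make this rigorous. Note that you have silently, and correctly, read the factor $D(\be(1),\dots,\be(r))$ in the displayed formula as $q^{D(\be(1),\dots,\be(r))}$, which is evidently a typo in the statement (as is $\be(1)$ for $\be(l)_{i_l}$ in the multinomial coefficient).
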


\section*{Acknowledgments}
E.F. was partially supported by the Dynasty Foundation and by the Simons foundation
The article was supported within the framework of a subsidy granted to the HSE by the Government of the Russian Federation
for the implementation of the Global Competitiveness Program
and within the framework of the Academic Fund Program at the National Research University Higher School of Economics (HSE) in
2015--2016 (grant N 15-01-0024).

The work of G.C.I. was financed by the
FIRB project ``Perspectives in Lie Theory'' RBFR12RA9W.

\bibliographystyle{amsplain}

\end{document}